\theoremstyle{plain}
\newtheorem{theorem}{Theorem}[section]
\newtheorem{corollary}[theorem]{Corollary}
\newtheorem{lemma}[theorem]{Lemma}
\newtheorem{proposition}[theorem]{Proposition}
\theoremstyle{definition}
\newtheorem{remark}[theorem]{Remark}
\numberwithin{equation}{section}
\newcommand{\reel}{\mathbb{R}}
\newcommand{\nat}{\mathbb{N}}
\newcommand{\ent}{\mathbb{Z}}
\newcommand{\comp}{\mathbb{C}}
\newcommand{\ds}{\displaystyle}
\newcommand{\vf}{\varphi}
\newcommand{\eps}{\varepsilon}
\newcommand{\abs}[1]{\left\vert #1\right\vert }
\newcommand{\bg}{\medskip\goodbreak}
\newcommand{\bn}{\medskip\nobreak}
\newcommand{\vers}{{\,\longrightarrow\,}}
\let\oldtocsection=\tocsection
\let\oldtocsubsection=\tocsubsection
\let\oldtocsubsubsection=\tocsubsubsection
\renewcommand{\tocsection}[2]{\hspace{0em}\oldtocsection{#1}{#2}}
\renewcommand{\tocsubsection}[2]{\hspace{2em}\oldtocsubsection{#1}{#2}}
\renewcommand{\tocsubsubsection}[2]{\hspace{4em}\oldtocsubsubsection{#1}{#2}}
\title[Inequalities for finite trigonometric sums]
{{\large Inequalities for finite trigonometric sums,}\\
	{\large An interplay:}\\
		{\large with some series related to Harmonic numbers}}
\author[Omran Kouba]{Omran Kouba$^\dag$}
\address{Department of Mathematics \\
	Higher Institute for Applied Sciences and Technology\\
	P.O. Box 31983, Damascus, Syria.}
\email{omran\_kouba@hiast.edu.sy}
\keywords{Bernoulli  polynomials, Bernoulli numbers,
	harmonic numbers, asymptotic expansion, sum of cosecants, sum of cotangents}
\subjclass[2010]{11B68, 11B83, 26D05, 26D15, 41A17.}
\thanks{$^\dag$ Department of Mathematics, Higher Institute for Applied Sciences and Technology.}
\begin{document}
\begin{abstract}
An interplay between the sum of certain series related to Harmonic numbers and certain finite trigonometric
sums is investigated. This allows us to express the sum of these series in terms 
of the considered trigonometric sums, and permits us to find sharp inequalities bounding these trigonometric sums. In particular,
this answers positively an open problem of H. Chen (2010).
\end{abstract}

\maketitle
\section{Introduction}\label{sec1}
\bg

Many identities that evaluate trigonometric sums in closed form  can be  found in the literature. For example, in a solution to a problem in SIAM Review \cite[p.157]{klam}, M.
Fisher  shows that
\[
\sum_{k=1}^{p-1}\sec^2\left(\frac{k\pi}{2p}\right)
=\frac{2}{3}\left(p^2-1\right),\quad
\sum_{k=1}^{p-1}\sec^4\left(\frac{k\pi}{2p}\right)
=\frac{4}{45}\left(2p^4+5p^2-7\right).
\]

General results giving closed forms for the power sums
secants 
$\sum_{k=1}^{p-1}\sec^{2n}(\frac{k\pi}{2p})$ and
${\sum_{k=1}^{p}\sec^{2n}(\frac{k\pi}{2p+1}})$, for many values of
the positive integer $n$, can be found in
\cite{chen} and \cite{grab}.
Also, in \cite{kou2} the author proves that \[\sum_{k=1}^{p}\sec\left(\frac{2k\pi}{2p+1}\right)
=\begin{cases}
\phantom{-}p&\text{if $p$ is even,}\\
-p-1& \text{if $p$ is odd.}
\end{cases}
\]

However, while there are many cases where closed 
forms for finite trigonometric sums can be obtained it seems that there are no such formul\ae\, for the sums we are interested in.

In this paper we study the trigonometric sums $I_p$ and $J_p$
defined for positive integers $p$ by the formul\ae:
\begin{align}
	I_p&=\sum_{k=1}^{p-1}\frac{1}{\sin(k\pi /p)}=\sum_{k=1}^{p-1}\csc\left(\frac{k\pi}{p}\right)\label{E:I}\\
	J_p&=\sum_{k=1}^{p-1}k\cot\left(\frac{k\pi}{p}\right)\label{E:J}
\end{align}
with empty sums interpreted as $0$. 
\bg
To the author's knowledge there is no known closed  form for $I_p$, and the same can be said
about the sum $J_p$. Therefore, we will look for  asymptotic expansions for these sums and will give some tight inequalities that bound $I_p$ and $J_p$. This investigation complements the work of H. Chen  in \cite[Chapter 7.]{chen2} where it was asked, as an open problem, whether the inequality
\[
I_p\le \frac{2p}{\pi}(\ln p+\gamma -\ln(\pi/2))
\]
holds true for $p\ge3$, (here
 $\gamma$ is the so called Euler-Mascheroni constant.) \bg
 In fact, it will be proved that
 for every positive integer $p$ and every nonnegative integer $n$, we have
 \begin{align*}
 	I_p&<\frac{2p}{\pi}(\ln p+\gamma-\ln(\pi/2))+
 	\sum_{k=1}^{2n}(-1)^{k}\frac{(2^{2k}-2)b_{2k}^2}{k\cdot(2k)!}\left(\frac{\pi}{p}\right)^{2k-1},\\
 	\noalign{\noindent\text{and}}
 	I_p&>\frac{2p}{\pi}(\ln p+\gamma-\ln(\pi/2))+
 	\sum_{k=1}^{2n+1}(-1)^{k}\frac{(2^{2k}-2)b_{2k}^2}{k\cdot(2k)!}\left(\frac{\pi}{p}\right)^{2k-1}.
 \end{align*}
where the $b_{2k}$'s are Bernoulli numbers (see Corollary \ref{cor94}. The corresponding inequalities for $J_p$ are also proved
(see Corollary \ref{cor97}.)
 
Harmonic numbers play an important role in this investigation. Recall that the $n$th harmonic number $H_n$ is defined by $H_n=\sum_{k=1}^n1/k$ (with the convention
$H_0=0$). In this work, a link between our trigonometric sums $I_p$ and $J_p$ and the 
sum of several series related to harmonic numbers is uncovered. Indeed, the well-known fact that $H_n=\ln n+\gamma+\frac{1}{2n}+\mathcal{O}\left(\frac{1}{n^2}\right)$ proves the convergence of the numerical series,
\begin{align*}
C_p&=\sum_{n=1}^\infty\left(H_{pn}-\ln(pn)-\gamma-\frac{1}{2pn}\right), \\
D_p&=\sum_{n=1}^\infty(-1)^{n-1}\left(H_{pn}-\ln(pn)-\gamma\right),\\
E_p&=
	\sum_{n=0}^\infty(-1)^{n}(H_{p(n+1)}-H_{pn}),
\end{align*}
for every positive integer $p$. 

An interplay between the considered trigonometric sums and the sum of these series will allow us to prove sharp inequalities for  $I_p$ and $J_p$, and to find the expression of the sums $C_p$, $D_p$ and $E_p$ in terms of $I_p$ and $J_p$.

The main tool will be the following formulation \cite[Corollary 8.2]{kou3} of the Euler-Maclaurin summation formula:

\begin{theorem}\label{cor61}
Consider a positive integer $m$, and a function $f$ that has a  continuous $(2m-1)^{\text{st}}$ derivative on $[0,1]$. If
 $f^{(2m-1)}$ is {\normalfont \text{decreasing}}, then
\[
\int_0^1f(t)\,dt=\frac{f(1)+f(0)}{2}
-\sum_{k=1}^{m-1}\frac{b_{2k}}{(2k)!}\,\delta f^{(2k-1)}+(-1)^{m+1}R_m
\]
with
\[
R_m=\int_0^{1/2}\frac{\abs{B_{2m-1}(t)}}{(2m-1)!}\,\left(f^{(2m-1)}(t)-f^{(2m-1)}(1-t)\right)\,dt
\]
and
\[
0\leq R_m\leq\frac{6}{(2\pi)^{2m}}\left(f^{(2m-1)}(0)-f^{(2m-1)}(1)\right).
\]
where the $b_{2k}$'s are Bernoulli numbers, $B_{2m-1}$ is the Bernoulli polynomial of degree $2m-1$, and the notation $\delta g$ for a function $g:[0,1]\to\comp$ means $g(1)-g(0)$.
\end{theorem}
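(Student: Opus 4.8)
The plan is to obtain the identity from the classical integration-by-parts chain that produces the Euler--Maclaurin formula, and then to reshape the single leftover integral into the stated symmetric remainder $R_m$. First I would start from $\int_0^1f(t)\,dt=\int_0^1f(t)B_0(t)\,dt$ and integrate by parts repeatedly, each time using $B_j'=jB_{j-1}$ to raise the index of the Bernoulli polynomial while transferring one derivative onto $f$. The boundary contributions are controlled by $B_j(0)=B_j(1)=b_j$ for $j\ge2$; since $b_{2k+1}=0$, only the even-indexed boundary terms survive, and a short induction shows that upon reaching $B_{2m-1}$ one arrives at
\[
\int_0^1f(t)\,dt=\frac{f(1)+f(0)}{2}-\sum_{k=1}^{m-1}\frac{b_{2k}}{(2k)!}\,\delta f^{(2k-1)}-\frac{1}{(2m-1)!}\int_0^1f^{(2m-1)}(t)B_{2m-1}(t)\,dt,
\]
where the vanishing $B_{2m-1}(0)=B_{2m-1}(1)=b_{2m-1}=0$ (for $m\ge2$) kills the final boundary term; the case $m=1$ reduces directly to the very first integration by parts.

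It then remains to show the last integral equals $(-1)^{m+1}R_m$. Here I would exploit the reflection identity $B_{2m-1}(1-t)=-B_{2m-1}(t)$: splitting $\int_0^1$ at $t=1/2$ and substituting $t\mapsto1-t$ on $[1/2,1]$ collapses the integral to
\[
\int_0^1f^{(2m-1)}(t)B_{2m-1}(t)\,dt=\int_0^{1/2}B_{2m-1}(t)\bigl(f^{(2m-1)}(t)-f^{(2m-1)}(1-t)\bigr)\,dt.
\]
I would then invoke the known constant sign of the odd Bernoulli polynomial on $(0,1/2)$, namely $B_{2m-1}(t)=(-1)^m\abs{B_{2m-1}(t)}$ there, which replaces $B_{2m-1}$ by its absolute value at the cost of a factor $(-1)^m$; dividing by $(2m-1)!$ and attaching the leading minus sign yields precisely $(-1)^{m+1}R_m$.

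For the two-sided estimate I would use the hypothesis that $f^{(2m-1)}$ is decreasing. Since $t<1-t$ on $(0,1/2)$, monotonicity gives $f^{(2m-1)}(t)-f^{(2m-1)}(1-t)\ge0$, so the integrand of $R_m$ is nonnegative and $R_m\ge0$. The same monotonicity gives $0\le f^{(2m-1)}(t)-f^{(2m-1)}(1-t)\le f^{(2m-1)}(0)-f^{(2m-1)}(1)$ uniformly on $[0,1/2]$, whence
\[
R_m\le\bigl(f^{(2m-1)}(0)-f^{(2m-1)}(1)\bigr)\int_0^{1/2}\frac{\abs{B_{2m-1}(t)}}{(2m-1)!}\,dt.
\]
I would evaluate the last integral via $B_{2m-1}=B_{2m}'/(2m)$ as $\int_0^{1/2}\abs{B_{2m-1}}=\frac{(-1)^m}{2m}\bigl(B_{2m}(1/2)-b_{2m}\bigr)$, insert $B_{2m}(1/2)=(2^{1-2m}-1)b_{2m}$, and finally use $\abs{b_{2m}}=2(2m)!\,\zeta(2m)/(2\pi)^{2m}$ to reduce the bound to the factor $2(2-2^{1-2m})\zeta(2m)/(2\pi)^{2m}$.

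The main obstacle is the closing numerical step: showing this factor is at most $6/(2\pi)^{2m}$, i.e. $(2-2^{1-2m})\zeta(2m)\le3$ for every $m\ge1$. This does not follow from the crude bound $2\zeta(2m)$, since $\zeta(2)=\pi^2/6$ makes $2\zeta(2)>3$; instead I would treat $m=1$ directly ($\tfrac32\cdot\tfrac{\pi^2}{6}=\tfrac{\pi^2}{4}<3$) and dispatch $m\ge2$ using $2-2^{1-2m}<2$ together with $\zeta(2m)\le\zeta(4)=\pi^4/90<\tfrac32$. The other delicate point, easy to get wrong, is keeping careful track throughout of the signs of $b_{2m}$ and of $B_{2m-1}$ on $(0,1/2)$ so that the alternating factor $(-1)^{m+1}$ and the positivity of $R_m$ come out consistently.
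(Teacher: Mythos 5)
Your proposal is correct, but a direct comparison with the paper is not possible in the usual sense: the paper never proves Theorem~\ref{cor61} at all --- it imports it verbatim from the author's lecture notes (\cite[Corollary~8.2]{kou3}) and uses it as a black box. So what you have supplied is a self-contained derivation where the paper has none, and it is the natural one: the integration-by-parts chain $\int_0^1 f B_0 \to \int_0^1 f^{(2m-1)}B_{2m-1}$ with the odd Bernoulli numbers killing the odd boundary terms, the reflection identity $B_{2m-1}(1-t)=-B_{2m-1}(t)$ to fold the remainder onto $[0,1/2]$, the constant sign $(-1)^m$ of $B_{2m-1}$ on $(0,1/2)$ to produce $\abs{B_{2m-1}}$ and the factor $(-1)^{m+1}$, and monotonicity of $f^{(2m-1)}$ for the two-sided bound. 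Each of these steps checks out, including the computation
\[
\frac{1}{(2m-1)!}\int_0^{1/2}\abs{B_{2m-1}(t)}\,dt
=\frac{2\left(2-2^{1-2m}\right)\zeta(2m)}{(2\pi)^{2m}},
\]
and you were right to flag the closing numerical step as the only delicate point: the uniform bound $2\zeta(2m)\le 3$ fails at $m=1$, and your split --- $m=1$ handled by $\tfrac32\zeta(2)=\pi^2/4<3$, and $m\ge2$ by $\left(2-2^{1-2m}\right)\zeta(2m)<2\zeta(4)<3$ --- closes it correctly (indeed the supremum of $\left(2-2^{1-2m}\right)\zeta(2m)$ is $\pi^2/4$, attained at $m=1$, so the constant $6$ in the statement is safe though not optimal). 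The one hypothesis worth stating explicitly in a final write-up is that $f^{(2m-1)}(0)-f^{(2m-1)}(1)\ge0$ (from monotonicity) is what allows you to replace your sharp constant by the larger constant $6/(2\pi)^{2m}$ without reversing the inequality.
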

For more information on the Euler-Maclaurin formula, Bernoulli polynomials and Bernoulli numbers the reader may refer to
\cite{abr, grad, hen,kou3,olv} and the references therein.
This paper is organized as follows. In section 2 we find the asymptotic expansions of $C_p$ and $D_p$ for large $p$. In section 3, the trigonometric sums $I_p$ and $J_p$ are studied.

\bg

\section{The sum of certain series related to harmonic numbers}\label{sec8}
\bn
In the next lemma, the  asymptotic expansion of $(H_n)_{n\in\nat}$ is presented.
It can be found implicitly in  \cite[Chapter 9]{knuth2} we present a proof for the convenience of the reader.
\bg
\begin{lemma}\label{pr81}
	For every positive integer $n$ and nonnegative integer $m$, we have
	\[
	H_n=\ln n+\gamma+\frac{1}{2n}-\sum_{k=1}^{m-1}\frac{b_{2k}}{2k}\cdot\frac{1}{n^{2k}}+(-1)^m R_{n,m},
	\]
	with 
	\[
	R_{n,m}=\int_0^{1/2}\abs{B_{2m-1}(t)}\,
	\sum_{j=n}^\infty\left(\frac{1}{(j+t)^{2m}}-\frac{1}{(j+1-t)^{2m}}
	\right)\,dt
	\]
	Moreover,  $ 0<R_{n,m}<\dfrac{\abs{b_{2m}}}{2m\cdot n^{2m}}$.
\end{lemma}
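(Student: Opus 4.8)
The plan is to obtain the expansion by applying Theorem~\ref{cor61} to the functions $t\mapsto 1/(j+t)$ on $[0,1]$, summing over $j$, and then recovering $\gamma$ from its defining limit $\gamma=\lim_{N\to\infty}(H_N-\ln N)$. A sign point must be handled first: Theorem~\ref{cor61} needs $f^{(2m-1)}$ \emph{decreasing}, whereas $t\mapsto 1/(j+t)$ has \emph{increasing} $(2m-1)$st derivative. I would therefore apply the theorem to $F_j(t)=-1/(j+t)$, whose derivative $F_j^{(2m-1)}(t)=(2m-1)!/(j+t)^{2m}$ is decreasing. Since $\int_0^1F_j=-\ln\frac{j+1}{j}$, $\frac12(F_j(1)+F_j(0))=-\frac12\big(\frac1j+\frac1{j+1}\big)$, and $\frac{b_{2k}}{(2k)!}\delta F_j^{(2k-1)}=\frac{b_{2k}}{2k}\big(\frac{1}{(j+1)^{2k}}-\frac{1}{j^{2k}}\big)$, negating the resulting identity gives, for every $j\ge1$,
\[
\ln\frac{j+1}{j}=\frac12\Big(\frac1j+\frac1{j+1}\Big)+\sum_{k=1}^{m-1}\frac{b_{2k}}{2k}\Big(\frac{1}{(j+1)^{2k}}-\frac{1}{j^{2k}}\Big)+(-1)^m r_{j,m},
\]
where $r_{j,m}=\int_0^{1/2}\abs{B_{2m-1}(t)}\big(\frac{1}{(j+t)^{2m}}-\frac{1}{(j+1-t)^{2m}}\big)\,dt$ is the (nonnegative) remainder of Theorem~\ref{cor61} for $F_j$.

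Next I would sum this over $j=n,\dots,N-1$. The left side telescopes to $\ln(N/n)$, the Bernoulli terms telescope to $\sum_{k=1}^{m-1}\frac{b_{2k}}{2k}(N^{-2k}-n^{-2k})$, and the trapezoidal terms reassemble as $H_N-H_n+\frac{1}{2n}-\frac{1}{2N}$. Solving for $H_n$ and letting $N\to\infty$ kills the $1/(2N)$ and $N^{-2k}$ terms, turns $H_N-\ln N$ into $\gamma$, and leaves $R_{n,m}=\sum_{j=n}^\infty r_{j,m}$; the series converges because the bound in Theorem~\ref{cor61} gives $r_{j,m}\le\frac{6(2m-1)!}{(2\pi)^{2m}}(j^{-2m}-(j+1)^{-2m})$, a telescoping majorant. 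Since every integrand is nonnegative on $[0,1/2]$, Tonelli's theorem lets me interchange the sum and the integral, yielding precisely the stated closed form for $R_{n,m}$.

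It remains to prove $0<R_{n,m}<\frac{\abs{b_{2m}}}{2m\,n^{2m}}$. Positivity is clear: for $t\in(0,1/2)$ the inner series is an alternating series built on the strictly increasing sequence $n+t<n+1-t<n+1+t<\cdots$, hence strictly positive, and $\abs{B_{2m-1}}$ is a nonzero polynomial, so the double integral is strictly positive. For the upper bound I expect the only real obstacle, because the crude estimate in Theorem~\ref{cor61} carries the constant $6$ and is too weak: it gives $\frac{6(2m-1)!}{(2\pi)^{2m}n^{2m}}$, whereas the target $\frac{\abs{b_{2m}}}{2m\,n^{2m}}=\frac{2(2m-1)!\,\zeta(2m)}{(2\pi)^{2m}n^{2m}}$ is smaller. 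Instead I would exploit the enveloping structure of the expansion. Setting $V=H_n-\ln n-\gamma-\frac1{2n}$, the expansions at orders $m-1$ and $m$ read
\[
V=-\sum_{k=1}^{m-1}\frac{b_{2k}}{2k\,n^{2k}}+(-1)^mR_{n,m},\qquad V=-\sum_{k=1}^{m}\frac{b_{2k}}{2k\,n^{2k}}+(-1)^{m+1}R_{n,m+1}.
\]
Subtracting them gives $(-1)^m(R_{n,m}+R_{n,m+1})=-\frac{b_{2m}}{2m\,n^{2m}}$, that is $R_{n,m}+R_{n,m+1}=\frac{(-1)^{m+1}b_{2m}}{2m\,n^{2m}}=\frac{\abs{b_{2m}}}{2m\,n^{2m}}$, using $b_{2m}=(-1)^{m+1}\abs{b_{2m}}$. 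Since $R_{n,m+1}>0$ by the same positivity argument, the strict bound $R_{n,m}<\frac{\abs{b_{2m}}}{2m\,n^{2m}}$ follows at once.
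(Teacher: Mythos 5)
Your proof is correct, and it reaches the sharp bound by the same key mechanism as the paper, but the way you derive the expansion itself is genuinely different. The paper first builds an integral representation of the error: starting from $\frac1j-\ln(1+\frac1j)=\int_0^1\frac{t}{j(j+t)}\,dt$ it obtains $\gamma+\ln n-H_n+\frac1n=\int_0^1 f_n(t)\,dt$ with $f_n(t)=\sum_{j=n}^\infty\frac{t}{j(j+t)}$, and then applies Theorem~\ref{cor61} \emph{once} to the tail function $f_n$, whose odd derivatives $\frac{f_n^{(2k-1)}(t)}{(2k-1)!}=\sum_{j=n}^\infty\frac{1}{(j+t)^{2k}}$ are decreasing; the remainder formula for $R_{n,m}$ then appears directly, with no interchange of limits needed beyond the construction of $f_n$. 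You instead apply Theorem~\ref{cor61} \emph{locally} to each $F_j(t)=-1/(j+t)$ (correctly flipping the sign to meet the monotonicity hypothesis), telescope over $j=n,\dots,N-1$, let $N\to\infty$ using $\gamma=\lim(H_N-\ln N)$, and then invoke Tonelli to collapse $\sum_j r_{j,m}$ into the paper's single integral $R_{n,m}$; this is the more classical interval-by-interval Euler--Maclaurin derivation, at the cost of one extra interchange argument (which you justify properly, including convergence via the telescoping majorant from the theorem's crude bound). From that point on the two proofs coincide in substance: both observe that the crude constant-$6$ bound is too weak, both rely on strict positivity $R_{n,m}>0$ (your direct argument from the integrand's positivity on $(0,1/2)$ is fine), and both get the upper bound from the enveloping property of consecutive orders --- the paper phrases it as case analysis on the parity of $m$, while your subtraction giving $R_{n,m}+R_{n,m+1}=\frac{\abs{b_{2m}}}{2m\,n^{2m}}$ is a cleaner formulation of exactly the same fact (both need the sign $b_{2m}=(-1)^{m+1}\abs{b_{2m}}$). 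One cosmetic remark: like the paper's own proof, your argument really requires $m\geq1$; the statement's ``nonnegative integer $m$'' is vacuous at $m=0$ since $B_{-1}$ is undefined.
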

\begin{proof}
	Note that for $j\geq1$ we have
	\[
	\frac{1}{j}-\ln\left(1+\frac{1}{j}\right)=\int_0^1\left(\frac1j-\frac{1}{j+t}\right)\,dt=\int_0^1\frac{t}{j(j+t)}\,dt
	\]
	Adding these equalities as $j$ varies from $1$ to $n-1$ we conclude that
	\[
	H_n-\ln n-\frac{1}{n}=\int_0^1\left(\sum_{j=1}^{n-1}\frac{t}{j(j+t)}\right)\,dt.
	\]
	Thus, letting $n$ tend to $\infty$, and using  the Monotone Convergence Theorem, we conclude
	\[
	\gamma=\int_0^1\left(\sum_{j=1}^{\infty}\frac{t}{j(j+t)}\right)\,dt.
	\]
	It follows that
	\begin{equation*}
	\gamma+\ln n-H_n+\frac{1}{n}=\int_0^1\left(\sum_{j=n}^\infty\frac{t}{j(j+t)}\right)\,dt.
	\end{equation*}
	So, let us consider the function $f_n:[0,1]\vers\reel$ defined by
	\[
	f_n(t)=\sum_{j=n}^\infty\frac{t}{j(j+t)}
	\]
	Note that $f_n(0)=0$, $f_n(1)=1/n$, and that $f_n$ is infinitely continuously derivable with
	\[
	\frac{f_n^{(k)}(t)}{k!}=(-1)^{k+1}\sum_{j=n}^\infty\frac{1}{(j+t)^{k+1}},\quad\text{for $k\geq1$.}
	\]
	In particular, 
	\[
	\frac{f_n^{(2k-1)}(t)}{(2k-1)!}=\sum_{j=n}^\infty\frac{1}{(j+t)^{2k}},\quad\text{for $k\geq1$.}
	\]
	So, $f_n^{(2m-1)}$ is decreasing on the interval $[0,1]$, and
	\[
	\frac{\delta f_n^{(2k-1)}}{(2k-1)!} =\sum_{j=n}^\infty\frac{1}{(j+1)^{2k}}
	-\sum_{j=n}^\infty\frac{1}{j^{2k}}=-\frac{1}{n^{2k}}
	\]
	Applying Theorem \ref{cor61} to $f_n$, and using the above data, we get
	\[
	\gamma+\ln n-H_n+\frac{1}{2n}=
	\sum_{k=1}^{m-1}\frac{b_{2k}}{2k\,n^{2k}}+(-1)^{m+1}R_{n,m}
	\]
	with
	\[
	R_{n,m}=\int_0^{1/2}\abs{B_{2m-1}(t)}\,
	\sum_{j=n}^\infty\left(\frac{1}{(j+t)^{2m}}-\frac{1}{(j+1-t)^{2m}}
	\right)\,dt
	\]
	and
	\[
	0< R_{n,m}<\frac{6\cdot(2m-1)!}{(2\pi)^{2m}n^{2m}}.
	\]
	The important estimate is the lower bound, \textit{i.e.} $R_{n,m}>0$. In fact, considering separately the cases $m$ odd and $m$ even, we obtain, for every nonnegative integer $m'$:

	\begin{align*}
	H_n&<\ln n+\gamma+\frac{1}{2n}-\sum_{k=1}^{2m'}\frac{b_{2k}}{2k}\cdot\frac{1}{n^{2k}},\\
	\noalign{\noindent\text{and}}
	H_n&>\ln n+\gamma+\frac{1}{2n}-\sum_{k=1}^{2m'+1}\frac{b_{2k}}{2k}\cdot\frac{1}{n^{2k}}.\\
	\end{align*}
	This yields the following more precise estimate for the error term:
	\begin{equation*} 
	0<(-1)^{m}\left(H_n-\ln n-\gamma-\frac{1}{2n}+
	\sum_{k=1}^{m-1}\frac{b_{2k}}{2k\cdot n^{2k}} \right)<\frac{\abs{b_{2m}}}{2m\cdot n^{2m}}
	\end{equation*}
	which is valid for every positive integer $m$. 
\end{proof}
\bg

Now, consider the two sequences $(c_n)_{n\geq1}$ and $(d_n)_{n\geq1}$ defined by
\[
c_n=H_n-\ln n-\gamma-\frac{1}{2n}\qquad\text{and}\qquad d_n=H_n-\ln n-\gamma
\]
For a positive integer $p$, we know according to Lemma~\ref{pr81} that $c_{pn}=\mathcal{O}\left(\frac{1}{n^2}\right)$, it follows that
the series $\sum_{n=1}^\infty c_{pn}$ is convergent. Similarly, since $d_{pn}=c_{pn}+\frac{1}{2pn}$ and the series $\sum_{n=1}^\infty(-1)^{n-1}/n$ is convergent, we conclude that $\sum_{n=1}^\infty(-1)^{n-1} d_{pn}$ is also convergent.
In what follows we aim to find asymptotic expansions, (for large $p$,) of the following sums:
\begin{align}
C_p&=\sum_{n=1}^\infty c_{pn}=\sum_{n=1}^\infty\left(H_{pn}-\ln(pn)-\gamma-\frac{1}{2pn}\right)\label{E:Cp}\\
D_p&=\sum_{n=1}^\infty (-1)^{n-1}d_{pn} =\sum_{n=1}^\infty(-1)^{n-1}\left(H_{pn}-\ln(pn)-\gamma\right)\label{E:Dp}\\
	E_p &=\sum_{n=0}^\infty(-1)^{n}(H_{p(n+1)}-H_{pn}).\label{E:Ep}
\end{align}

\begin{proposition}\label{pr82}
	If $p$ and $m$ are positive integers and $C_p$ is defined by \eqref{E:Cp}, then
	\[
	C_p=-\sum_{k=1}^{m-1}\frac{b_{2k}\zeta(2k)}{2k\cdot p^{2k}}
	+(-1)^m\frac{\zeta(2m)}{2m\cdot p^{2m}}\eps_{p,m},\quad\text{with $0<\eps_{p,m}<\abs{b_{2m}}$},
	\]
	where $\zeta$ is the well-known Riemann zeta function.
\end{proposition}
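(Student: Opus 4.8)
The plan is to invoke Lemma~\ref{pr81} termwise and then sum over $n$. First I would rewrite the lemma in terms of the sequence $c_n$: subtracting $\ln n+\gamma+\frac{1}{2n}$ from both sides of the expansion of $H_n$ gives
\[
c_n=-\sum_{k=1}^{m-1}\frac{b_{2k}}{2k}\cdot\frac{1}{n^{2k}}+(-1)^m R_{n,m},
\qquad 0<R_{n,m}<\frac{\abs{b_{2m}}}{2m\cdot n^{2m}}.
\]
Replacing $n$ by $pn$ and summing over $n\geq1$, the finite sum over $k$ and the infinite sum over $n$ can be interchanged because only finitely many convergent series $\sum_{n\geq1}n^{-2k}$ (with $2k\geq2$) are involved, and the remainder series $\sum_{n\geq1}R_{pn,m}$ converges by the bound above. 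This yields
\[
C_p=-\sum_{k=1}^{m-1}\frac{b_{2k}}{2k\cdot p^{2k}}\sum_{n=1}^\infty\frac{1}{n^{2k}}+(-1)^m\sum_{n=1}^\infty R_{pn,m},
\]
and recognizing $\sum_{n\geq1}n^{-2k}=\zeta(2k)$ produces exactly the claimed main term $-\sum_{k=1}^{m-1}b_{2k}\zeta(2k)/(2k\,p^{2k})$.

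It then remains to package the tail $\sum_{n\geq1}R_{pn,m}$ into the stated error form. Using the sharp two-sided bound $0<R_{pn,m}<\abs{b_{2m}}/\bigl(2m\,(pn)^{2m}\bigr)$ from Lemma~\ref{pr81} and summing over $n$, I obtain
\[
0<\sum_{n=1}^\infty R_{pn,m}<\frac{\abs{b_{2m}}}{2m\cdot p^{2m}}\sum_{n=1}^\infty\frac{1}{n^{2m}}=\frac{\abs{b_{2m}}\,\zeta(2m)}{2m\cdot p^{2m}}.
\]
Defining
\[
\eps_{p,m}=\frac{2m\,p^{2m}}{\zeta(2m)}\sum_{n=1}^\infty R_{pn,m}
\]
rewrites the tail as precisely $\zeta(2m)\,\eps_{p,m}/(2m\,p^{2m})$, and the displayed inequalities translate directly into $0<\eps_{p,m}<\abs{b_{2m}}$, which is the desired conclusion.

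The computation is essentially mechanical once Lemma~\ref{pr81} is in hand, so I do not expect a genuine obstacle: the delicate analytic work—establishing the sharp positivity and upper bound for the single-term remainder $R_{n,m}$—has already been done in the lemma, and here one merely propagates those estimates through the convergent sum $\sum_{n\geq1}(pn)^{-2m}$. The only point deserving an explicit word of justification is the interchange of the finite sum over $k$ with the infinite sum over $n$, which is legitimate for the reason noted above.
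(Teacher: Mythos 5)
Your proposal is correct and follows essentially the same route as the paper: apply Lemma~\ref{pr81} with $n$ replaced by $pn$, sum over $n$ (the interchange with the finite sum over $k$ being immediate by linearity of convergent series), identify $\sum_{n\ge1}n^{-2k}=\zeta(2k)$, and normalize the remainder tail by $\zeta(2m)$ to obtain $\eps_{p,m}$ with $0<\eps_{p,m}<\abs{b_{2m}}$. The only cosmetic difference is that the paper first rescales the single-term remainder as $r_{pn,m}=2m(pn)^{2m}R_{pn,m}$ and sums $\tilde r_{p,m}=\sum_{n\ge1}r_{pn,m}/n^{2m}$, whereas you sum $R_{pn,m}$ directly and rescale at the end; the two are identical in content.
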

\begin{proof} Indeed, we conclude from Lemma \ref{pr81} that 
	\[
	H_{pn}-\ln(pn)-\gamma-\frac{1}{2pn}=-\sum_{k=1}^{m-1}\frac{b_{2k}}{2k\cdot p^{2k}}\cdot\frac{1}{n^{2k}}
	+\frac{(-1)^m}{2m\cdot p^{2m}}\cdot\frac{r_{pn,m}}{n^{2m}}.
	\]
	with $0<r_{pn,m}\leq\abs{b_{2m}}$. It follows that
	\[C_p=-\sum_{k=1}^{m-1}\frac{b_{2k}}{2k\,p^{2k}}\cdot\left(\sum_{n=1}^\infty
	\frac{1}{n^{2k}}\right)+\frac{(-1)^m}{2m\cdot p^{2m}}\cdot\tilde{r}_{p,m}.
	\]
	where $\tilde{r}_{p,m}=\sum_{n=1}^\infty\frac{r_{pn,m}}{n^{2m}}$. 
	\bg
	Hence,
	\[
	0<\tilde{r}_{p,m}=\sum_{n=1}^\infty
	\frac{r_{pn,m}}{n^{2m}}< \abs{b_{2m}} \,
	\sum_{n=1}^\infty
	\frac{1}{n^{2m}}=\abs{b_{2m}}\zeta(2m)
	\]
	and the desired conclusion follows with $\eps_{p,m}=\tilde{r}_{p,m}/\zeta(2m)$.
\end{proof}
\bg
For example, taking $m=3$, we obtain
\[
\sum_{n=1}^\infty\left(H_{pn}-\ln(pn)-\gamma-\frac{1}{2pn}\right)
=-\frac{\pi^2}{72p^2}+\frac{\pi^4}{10800p^4}+\mathcal{O}\left(\frac{1}{p^6}\right).
\]

\bg
In the next proposition we have the analogous result corresponding to $D_p$.\bg

\begin{proposition}\label{pr83}
	If $p$ and $m$ are positive integers and $D_p$ is defined by \eqref{E:Dp}, then
	\[
	D_p=\frac{\ln 2}{2p}
	-\sum_{k=1}^{m-1} \frac{b_{2k}\eta(2k)}{2k \cdot p^{2k}}
	+(-1)^m\frac{\eta(2m)}{2m\cdot p^{2m}}\eps'_{p,m},\quad\text{with $0<\eps'_{p,m}<\abs{b_{2m}}$,}
	\]
	where $\eta$ is the Dirichlet eta function \cite{wis}. 
\end{proposition}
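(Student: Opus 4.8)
The proof of Proposition~\ref{pr83} should run parallel to that of Proposition~\ref{pr82}, the only essential difference being that the alternating sign $(-1)^{n-1}$ replaces the zeta-function summation by the Dirichlet eta function and introduces the extra leading term $\frac{\ln 2}{2p}$. The plan is to start from the asymptotic expansion of $H_{pn}$ furnished by Lemma~\ref{pr81}, but applied to $d_{pn}=H_{pn}-\ln(pn)-\gamma$ rather than to $c_{pn}$. Since $d_{pn}=c_{pn}+\frac{1}{2pn}$, I would write
\[
d_{pn}=\frac{1}{2pn}-\sum_{k=1}^{m-1}\frac{b_{2k}}{2k\cdot p^{2k}}\cdot\frac{1}{n^{2k}}+\frac{(-1)^m}{2m\cdot p^{2m}}\cdot\frac{r_{pn,m}}{n^{2m}},
\]
with $0<r_{pn,m}\le\abs{b_{2m}}$, exactly as in the previous proof.

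Next I would multiply by $(-1)^{n-1}$ and sum over $n\geq1$, interchanging summation with the finite sum over $k$. The key point is that the alternating sums of the relevant Dirichlet series are the eta values: $\sum_{n=1}^\infty\frac{(-1)^{n-1}}{n^{2k}}=\eta(2k)$ for $k\geq1$, while $\sum_{n=1}^\infty\frac{(-1)^{n-1}}{n}=\ln 2$. The first of these identities turns the main sum over $k$ into $-\sum_{k=1}^{m-1}\frac{b_{2k}\eta(2k)}{2k\cdot p^{2k}}$, and the second turns the contribution of the $\frac{1}{2pn}$ term into precisely $\frac{\ln 2}{2p}$, accounting for the extra leading term in the statement. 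The error term becomes $\frac{(-1)^m}{2m\cdot p^{2m}}\,\tilde r'_{p,m}$ where $\tilde r'_{p,m}=\sum_{n=1}^\infty\frac{(-1)^{n-1}r_{pn,m}}{n^{2m}}$, and setting $\eps'_{p,m}=\tilde r'_{p,m}/\eta(2m)$ gives the claimed form.

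The main obstacle, and the one genuine difference from Proposition~\ref{pr82}, is establishing the two-sided bound $0<\eps'_{p,m}<\abs{b_{2m}}$, equivalently $0<\tilde r'_{p,m}<\abs{b_{2m}}\,\eta(2m)$. In the non-alternating case this followed trivially from $0<r_{pn,m}\le\abs{b_{2m}}$ term by term, but here the factor $(-1)^{n-1}$ prevents a termwise comparison, so I would instead exploit the monotonicity of $\bigl(r_{pn,m}/n^{2m}\bigr)_{n\ge1}$. Indeed, from Lemma~\ref{pr81} one has $\frac{r_{pn,m}}{n^{2m}}=\frac{2m\cdot p^{2m}}{(-1)^m}\bigl(d_{pn}-\frac{1}{2pn}+\sum_{k=1}^{m-1}\frac{b_{2k}}{2k p^{2k}n^{2k}}\bigr)$, and I would argue that this quantity is positive and decreasing in $n$; the positivity is immediate from the lemma's sharpened error estimate, while the monotone decrease follows from the analogous monotonicity of $R_{pn,m}$ as a tail sum. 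Once the sequence $a_n\egdef r_{pn,m}/n^{2m}$ is shown to be positive and nonincreasing with $a_1\le\abs{b_{2m}}$, the alternating series $\tilde r'_{p,m}=\sum_{n=1}^\infty(-1)^{n-1}a_n$ satisfies $0<\tilde r'_{p,m}<a_1\le\abs{b_{2m}}$ by the standard Leibniz bracketing, and since $\eta(2m)\ge\eta(2)>\frac12$ one recovers the slightly weaker but cleaner bound $\eps'_{p,m}<\abs{b_{2m}}$ stated in the proposition. Verifying the monotonicity of $a_n$ carefully is where the real work lies; everything else is a direct transcription of the previous argument with $\zeta$ replaced by $\eta$.
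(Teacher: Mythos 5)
Your computation of the expansion itself is sound and matches the paper's: substituting Lemma~\ref{pr81}'s expansion of $d_{pn}$, summing against $(-1)^{n-1}$, and identifying $\sum_{n\geq1}(-1)^{n-1}n^{-2k}=\eta(2k)$ and $\sum_{n\geq1}(-1)^{n-1}/n=\ln 2$ is legitimate (each constituent series converges). Your lower bound is also correct: $a_n=r_{pn,m}/n^{2m}=2mp^{2m}R_{pn,m}$ is positive and strictly decreasing in $n$, since each $g_{N,m}(t)$ is a tail of nonnegative terms, so Leibniz bracketing gives $\tilde r'_{p,m}>0$. This is in substance the paper's own positivity argument, which runs the same alternating-series estimate inside the integral representation (showing $0<G_{p,m}(t)<g_{p,m}(t)$) rather than outside it.

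The genuine gap is your upper bound. Leibniz gives $\tilde r'_{p,m}<a_1\leq\abs{b_{2m}}$, hence only $\eps'_{p,m}=\tilde r'_{p,m}/\eta(2m)<\abs{b_{2m}}/\eta(2m)$; since $\eta(2m)<1$, this bound \emph{exceeds} $\abs{b_{2m}}$. Your parenthetical that the proposition's bound is ``slightly weaker'' has the comparison inverted: what must be proved, $\tilde r'_{p,m}<\abs{b_{2m}}\eta(2m)$, is strictly \emph{stronger} than what bracketing yields (and it is asymptotically sharp as $p\to\infty$, since $r_{pn,m}\to\abs{b_{2m}}$), while your remark $\eta(2m)>\tfrac12$ only salvages $\eps'_{p,m}<2\abs{b_{2m}}$. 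The paper closes this by a bootstrap that needs nothing beyond the positivity you already have --- but at two consecutive orders. Writing the expansion at orders $m$ and $m+1$, the error terms satisfy
\[
D_p-\frac{\ln 2}{2p}+\sum_{k=1}^{m-1}\frac{b_{2k}\eta(2k)}{2k\cdot p^{2k}}
=-\frac{b_{2m}\eta(2m)}{2m\cdot p^{2m}}
+\left(D_p-\frac{\ln 2}{2p}+\sum_{k=1}^{m}\frac{b_{2k}\eta(2k)}{2k\cdot p^{2k}}\right),
\]
and since $\sgn(b_{2m})=(-1)^{m+1}$ while the bracketed order-$(m+1)$ error has sign $(-1)^{m+1}$ by positivity, multiplying by $(-1)^m$ gives
\[
0<(-1)^{m}\left(D_p-\frac{\ln 2}{2p}+\sum_{k=1}^{m-1}\frac{b_{2k}\eta(2k)}{2k\cdot p^{2k}}\right)<\frac{\abs{b_{2m}}\eta(2m)}{2m\cdot p^{2m}},
\]
which is exactly the proposition. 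So replace your final step by this two-order comparison (the paper's ``considering separately the cases $m$ odd and $m$ even''); the monotonicity of $R_{pn,m}$ that you outline is then needed only for positivity, where it does suffice.
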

\begin{proof}
	Indeed, let us define $a_{n,m}$ by the formula
	\[
	a_{n,m}=H_n-\ln n-\gamma-\frac{1}{2n}+\sum_{k=1}^{m-1}\frac{b_{2k}}{2k\cdot n^{2k}}
	\]
	with empty sum equal to 0. We have shown in the proof of Lemma \ref{pr81} that
	\[
	(-1)^ma_{n,m}=\int_0^{1/2}\abs{B_{2m-1}(t)}g_{n,m}(t)\,dt
	\]
	where $g_{n,m}$ is the positive decreasing function on $[0,1/2]$ defined by
	\[
	g_{n,m}(t)=\sum_{j=n}^\infty\left(\frac{1}{(j+t)^{2m}}-\frac{1}{(j+1-t)^{2m}}\right).
	\]
	Now, for every $t\in[0,1/2]$ the sequence $(g_{np,m}(t))_{n\geq1}$ is positive and decreasing to $0$. So, using the alternating series criterion
	\cite[Theorem~7.8, and Corollary~7.9]{aman} we see that, for every $N\geq1$ and $t\in[0,1/2]$,
	\[
	\abs{\sum_{n=N}^\infty(-1)^{n-1}g_{np,m}(t)}\leq g_{Np,m}(t)\leq g_{Np,m}(0)=\frac{1}{(Np)^{2m}}.
	\]
	This proves the uniform convergence on $[0,1/2]$ of the series 
	\[G_{p,m}(t)=\sum_{n=1}^\infty(-1)^{n-1}g_{np,m}(t).
	\]
	Consequently
	\[
	(-1)^m\sum_{n=1}^\infty(-1)^{n-1}a_{pn,m}=\int_0^{1/2}\abs{B_{2m-1}(t)}G_{p,m}(t)\,dt.
	\]
	Now using the properties of alternating series, we see that for $t\in(0,1/2)$ we have
	\[
	0<G_{p,m}(t)<g_{p,m}(t)<g_{p,m}(0)=\sum_{j=p}^\infty\left(\frac{1}{j^{2m}}-\frac{1}{(j+1)^{2m}}\right)=\frac{1}{p^{2m}}
	\]
	Thus, 
	\[
	\sum_{n=1}^\infty(-1)^{n-1}a_{pn,m}=\frac{(-1)^m}{p^{2m}}\rho_{p,m}
	\]
	with $0<\rho_{p,m}<\int_0^{1/2}\abs{B_{2m-1}(t)}\,dt$.
	\bg
	On the other hand we have
	\begin{align*}
	\sum_{n=1}^\infty(-1)^{n-1}a_{pn,m}&=D_p
	-\frac{1}{2p} \sum_{n=1}^\infty\frac{(-1)^{n-1}}{n}+\sum_{k=1}^{m-1}\frac{b_{2k}}{2k\,p^{2k}}\sum_{n=1}^\infty\frac{(-1)^{n-1}}{n^{2k}}\\
	&=D_p-\frac{\ln 2}{2p} +\sum_{k=1}^{m-1}\frac{b_{2k}\eta(2k)}{2k\cdot p^{2k}}.
	\end{align*}
	Thus
	\[
	D_p=\frac{\ln 2}{2p}-\sum_{k=1}^{m-1}\frac{b_{2k}\eta(2k)}{2k\cdot p^{2k}}
	+\frac{(-1)^m}{p^{2m}}\rho_{p,m}
	\]
	Now,  the important estimate for $\rho_{p,m}$ is the lower bound, \textit{i.e.} $\rho_{p,m}>0$. In fact, considering separately the cases $m$ odd and $m$ even, we obtain, for every nonnegative integer $m'$:
	\begin{align*}
	D_p&<\frac{\ln 2}{2p}-\sum_{k=1}^{2m'}\frac{b_{2k}\eta(2k)}{2k\cdot p^{2k}},\\
	\noalign{\text{and}}
	D_p&>\frac{\ln 2}{2p}-\sum_{k=1}^{2m'+1}\frac{b_{2k}\eta(2k)}{2k\cdot p^{2k}}.\\
	\end{align*}
	This yields the following more precise estimate for the error term:
	\[
	0<(-1)^{m}\left(D_p-\frac{\ln 2}{2p}+\sum_{k=1}^{m-1}\frac{b_{2k}\eta(2k)}{2k\,p^{2k}}
	\right)<\frac{\abs{b_{2m}}\eta(2m)}{2m\cdot p^{2m}},
	\]
	and the desired conclusion follows.
\end{proof}
\bg

The case of $E_p$ which is the sum of another alternating series 
\eqref{E:Ep} is discussed in the next lemma where 
it is shown that $E_p$ can be easily expressed in terms of $D_p$.\bg
\begin{lemma}\label{lm84}
	For a positive integer $p$, we have
	\begin{equation*}
	E_p =\ln p+\gamma-\ln\left(\frac{\pi}{2}\right)+2D_p,
	\end{equation*}
	where $D_p$ is the sum defined by  \eqref{E:Dp}.
\end{lemma}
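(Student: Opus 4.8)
The plan is to reduce the alternating ``difference'' series defining $E_p$ to the series defining $D_p$ by a summation-by-parts (Abel-type) manipulation, and then to identify the resulting additive constant through an asymptotic computation in which the Wallis/Stirling formula produces the term $\ln(\pi/2)$.

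First I would set $a_n=H_{pn}$, with $a_0=H_0=0$, so that $E_p=\lim_{N\to\infty}S_N$ where $S_N=\sum_{n=0}^N(-1)^n(a_{n+1}-a_n)$; this limit exists because $H_{p(n+1)}-H_{pn}=\sum_{j=pn+1}^{p(n+1)}1/j$ decreases to $0$, so the alternating series test applies. A short reindexing (easily checked on small $N$) gives the closed form for the partial sums
\[
S_N=2\sum_{n=1}^N(-1)^{n-1}H_{pn}+(-1)^N H_{p(N+1)}.
\]
Since the full series converges, I may compute $E_p$ as the limit along the even subsequence $N=2M$, which is convenient because the bare alternating sum $\sum_{n=1}^{2M}(-1)^{n-1}$ vanishes.

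Next I would insert $H_{pn}=\ln(pn)+\gamma+d_{pn}$, where $d_{pn}=H_{pn}-\ln(pn)-\gamma$ is exactly the summand of $D_p$. Because $\sum_{n=1}^{2M}(-1)^{n-1}=0$, the contributions of $\gamma$ and of $\ln p$ drop out, leaving
\[
S_{2M}=\left(2L_{2M}+H_{p(2M+1)}\right)+2\sum_{n=1}^{2M}(-1)^{n-1}d_{pn},\qquad L_{2M}=\sum_{n=1}^{2M}(-1)^{n-1}\ln n.
\]
The second sum tends to $D_p$ as $M\to\infty$, so the whole identity hinges on evaluating $\lim_{M\to\infty}\bigl(2L_{2M}+H_{p(2M+1)}\bigr)$.

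This last limit is where the main work lies. I would rewrite $L_{2M}$ by grouping odd and even factors as $L_{2M}=\ln\bigl((2M)!/(4^M(M!)^2)\bigr)$, and invoke Stirling's formula (equivalently the Wallis product) in the form $\sqrt{\pi M}\,(2M)!/(4^M(M!)^2)\to1$, which gives $2L_{2M}=-\ln(\pi M)+o(1)$. Pairing this with $H_{p(2M+1)}=\ln(2pM)+\gamma+o(1)$, the $\pm\ln M$ terms cancel and the constant collapses to $\ln p+\gamma+\ln2-\ln\pi=\ln p+\gamma-\ln(\pi/2)$. Passing to the limit in the displayed identity then yields $E_p=\ln p+\gamma-\ln(\pi/2)+2D_p$, as claimed. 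The only delicate point is the summation-by-parts step: since $H_{pn}\not\to0$, one cannot split $S_N$ naively into two separately convergent series, and it is \emph{essential} to retain the boundary term $(-1)^N H_{p(N+1)}$ and let it combine with $2L_{2M}$ before the limit is taken.
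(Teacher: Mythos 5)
Your proof is correct, and it rests on the same two ingredients as the paper's (an Abel-type rearrangement linking $E_p$ to $2D_p$, plus Wallis/Stirling to produce the constant $\ln(\pi/2)$), but it runs the summation by parts in the opposite direction, which changes the character of the argument. The paper starts from $2D_p$ and applies the index-shift/telescoping trick to the sequence $d_{pn}=H_{pn}-\ln(pn)-\gamma$, which tends to zero; consequently there is no boundary term at all, every intermediate series converges, and the constant appears directly as the sum of the convergent alternating series $\sum_{n\geq1}(-1)^{n-1}\ln\bigl(\tfrac{n+1}{n}\bigr)=\ln(\pi/2)$, evaluated by the Wallis product. You instead sum by parts on $E_p$ itself, i.e.\ on the divergent sequence $H_{pn}$, so you must carry the non-vanishing boundary term $(-1)^N H_{p(N+1)}$, pass to the even subsequence $N=2M$, and let the two divergent pieces $2L_{2M}=-\ln(\pi M)+o(1)$ and $H_{p(2M+1)}=\ln(2pM)+\gamma+o(1)$ cancel before the limit exists. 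Both computations are sound (your partial-sum identity $S_N=2\sum_{n=1}^N(-1)^{n-1}H_{pn}+(-1)^NH_{p(N+1)}$ checks out, and the even-subsequence device legitimately recovers the full limit since the series is known to converge), so the difference is one of economy: the paper's choice of which sequence to difference keeps the entire argument inside convergent series and needs nothing beyond $d_n\to0$ and the Wallis product, whereas your route trades that cleanliness for an asymptotic evaluation that must be justified by Stirling's formula and, as you rightly emphasize, collapses if the boundary term is dropped prematurely.
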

\begin{proof}
	Indeed
	\begin{align*}
	2D_p&=d_p+\sum_{n=2}^\infty(-1)^{n-1}d_{pn}+\sum_{n=1}^\infty(-1)^{n-1}d_{pn}\\
	&=d_p+\sum_{n=1}^\infty(-1)^{n}d_{p(n+1)}+\sum_{n=1}^\infty(-1)^{n-1}d_{pn}\\
	&=d_p+\sum_{n=1}^\infty(-1)^{n-1}(d_{pn}-d_{p(n+1)})\\
	&=d_p+\sum_{n=1}^\infty(-1)^{n}(H_{p(n+1)}-H_{pn})+\sum_{n=1}^\infty(-1)^{n-1}\ln\left(\frac{n+1}{n}\right)\\
	&=-\ln p-\gamma+\sum_{n=0}^\infty(-1)^{n}(H_{p(n+1)}-H_{pn})+\sum_{n=1}^\infty(-1)^{n-1}\ln\left(\frac{n+1}{n}\right)\\
	\end{align*}
	Using Wallis formula for $\pi$~ \cite[Formula~0.262]{grad},  we have
	\begin{align*}
	\sum_{n=1}^\infty(-1)^{n-1}\ln\left(\frac{n+1}{n}\right)&=
	\sum_{n=1}^\infty\ln\left(\frac{2n}{2n-1}\cdot\frac{2n}{2n+1}\right)\\
	&=-\ln\prod_{n=1}^\infty\left(1-\frac{1}{4n^2}\right)=\ln\left(\frac{\pi}{2}\right)\\
	\end{align*}
	and the desired formula follows.
\end{proof}

\section{Inequalities for trigonometric sums}\label{sec2}

As we mentioned in the introduction, we are interested in the sum
of cosecants  $I_p$ defined by \eqref{E:I} and the sum of cotangents $J_p$ defined by \eqref{E:J}. Many other trigonometric sums can be expressed in terms of $I_p$ and $J_p$. The next
lemma lists some of these identities.
\bg
\begin{lemma}\label{lm91} For a positive integer $p$ let
	\begin{alignat*}{2}
		K_p&=\sum_{k=1}^{p-1}\tan\left(\frac{k\pi}{2p}\right),
		&\qquad \widetilde{K}_p&=\sum_{k=1}^{p-1}\cot\left(\frac{k\pi}{2p}\right),\\
		L_p&=\sum_{k=1}^{p-1}\frac{k}{\sin(k\pi/p)},
		&\qquad M_p&=\sum_{k=1}^{p}(2k-1)\cot\left(\frac{(2k-1)\pi}{2p}\right)
	\end{alignat*}
	Then,
	
	$\ds
	\begin{matrix}
	\hfill i.&\hfill K_p=&\widetilde{K}_p=I_p.\hfill\label{lm911}\\
	\hfill ii.&\hfill L_p=&(p/2)\,I_p.\hfill\label{lm912}\\
	\hfill iii.&\hfill M_p=&(p/2)\,J_{2p}-2J_p=-p\,I_p.\hfill\label{lm913}\\
	\end{matrix}
	$
\end{lemma}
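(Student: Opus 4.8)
The plan is to obtain all three identities from the reflection symmetry $k\mapsto p-k$ together with a couple of elementary half-angle identities, reserving the genuine work for part (iii).

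For (i), I would first apply the substitution $k\mapsto p-k$ to $\widetilde K_p$; since $\cot((p-k)\pi/(2p))=\cot(\pi/2-k\pi/(2p))=\tan(k\pi/(2p))$, this turns $\widetilde K_p$ into $K_p$, so $K_p=\widetilde K_p$. Then I would add the two sums termwise and use $\tan x+\cot x=2\csc(2x)$ with $x=k\pi/(2p)$, which gives $K_p+\widetilde K_p=\sum_{k=1}^{p-1}2/\sin(k\pi/p)=2I_p$. Combining the two facts yields $K_p=\widetilde K_p=I_p$. For (ii), the same substitution $k\mapsto p-k$ in $L_p$, using $\sin((p-k)\pi/p)=\sin(k\pi/p)$, produces $L_p=\sum_{k=1}^{p-1}(p-k)/\sin(k\pi/p)$; adding this to the original expression the numerators collapse to the constant $p$, so $2L_p=pI_p$ and $L_p=(p/2)I_p$.

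Part (iii) is where the real effort lies, and I would split it into two steps. The first equality comes from sorting the terms of $J_{2p}=\sum_{k=1}^{2p-1}k\cot(k\pi/(2p))$ by the parity of $k$: the even indices $k=2j$ contribute $\sum_{j=1}^{p-1}2j\cot(j\pi/p)=2J_p$, while the odd indices $k=2j-1$ contribute exactly $M_p$. Thus $J_{2p}=2J_p+M_p$, i.e. $M_p=J_{2p}-2J_p$. The harder identity $J_{2p}-2J_p=-pI_p$ I would get from the cosecant–cotangent identity $\cot(k\pi/(2p))=\csc(k\pi/p)+\cot(k\pi/p)$, a rearrangement of $\csc\theta=\cot(\theta/2)-\cot\theta$, applied to every term of $J_{2p}$ except the middle one $k=p$, which equals $p\cot(\pi/2)=0$. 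For the lower half $k=1,\dots,p-1$ this contributes $L_p+J_p$. For the upper half $k=p+r$ with $1\le r\le p-1$, the periodicities $\csc(\pi+x)=-\csc x$ and $\cot(\pi+x)=\cot x$ reduce the arguments back to $r\pi/p$ and give $-pI_p-L_p+J_p$. Adding the two halves, the $\pm L_p$ cancel, the sum $\sum_{k=1}^{p-1}\cot(k\pi/p)=0$ drops out, and one is left with $J_{2p}=2J_p-pI_p$.

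The main obstacle is precisely this last bookkeeping: one must peel off the vanishing middle term, apply the correct sign in $\csc(\pi+x)=-\csc x$ while keeping $\cot(\pi+x)=\cot x$ unchanged on the upper half, and track the four resulting sums so as to see that the two weighted cosecant sums $L_p$ cancel exactly, that the plain cotangent sum vanishes, and that only the plain cosecant sum survives to produce the $-pI_p$. Once $J_{2p}=2J_p-pI_p$ is established, combining it with $M_p=J_{2p}-2J_p$ gives $M_p=-pI_p$, which completes (iii).
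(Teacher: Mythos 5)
Your proposal is correct, and for (i), (ii), and the first equality of (iii) it coincides with the paper's own proof: the same reflection $k\mapsto p-k$, the same identity $\tan x+\cot x=2\csc(2x)$, and the same parity split of $J_{2p}$ giving $M_p=J_{2p}-2J_p$. (Incidentally, like the paper's proof, you silently correct a typo in the statement: the middle member of (iii) must read $J_{2p}-2J_p$, not $(p/2)\,J_{2p}-2J_p$; the latter is false, e.g. for $p=3$.) The only divergence is in proving $J_{2p}-2J_p=-pI_p$, and here the two routes are variants of the same idea. The paper splits $J_{2p}$ at the vanishing middle term and reflects the upper half via $k\mapsto 2p-k$, using $\cot\bigl((2p-k)\pi/(2p)\bigr)=-\cot\bigl(k\pi/(2p)\bigr)$, which yields $J_{2p}=2\sum_{k=1}^{p-1}k\cot\bigl(k\pi/(2p)\bigr)-2p\widetilde K_p$; then the half-angle identity $\cot(\theta/2)-\cot\theta=\csc\theta$ turns $J_{2p}-2J_p$ into $2L_p-2pI_p$, and parts (i) and (ii) finish the computation. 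You instead expand each term by $\cot(\theta/2)=\csc\theta+\cot\theta$ first and reduce the upper half by periodicity ($\csc(\pi+x)=-\csc x$, $\cot(\pi+x)=\cot x$); the two copies of $L_p$ then cancel, so your part (iii) is logically independent of (i) and (ii) --- a small structural advantage --- but it requires the additional fact $\sum_{k=1}^{p-1}\cot(k\pi/p)=0$, which you assert without justification. That fact does need a line of proof, though it follows from the same symmetry you use everywhere else: $\cot\bigl((p-k)\pi/p\bigr)=-\cot\bigl(k\pi/p\bigr)$, so the sum equals its own negative. With that one line added, your argument is complete and of essentially the same length as the paper's.
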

\begin{proof}
	First, note that the change of summation variable $k\leftarrow p-k$ proves that $K_p=\widetilde{K}_p$. So,
	using the trigonometric identity $\tan\theta+\cot\theta=2\csc(2\theta)$ we obtain $(i)$ as follows:
	\begin{equation*}
		2K_p=K_p+\widetilde{K}_p=\sum_{k=1}^{p-1}\left(\tan\left(\frac{k\pi}{2p}\right)+\cot\left(\frac{k\pi}{2p}\right)\right)
		=2\sum_{k=1}^{p-1}\csc\left(\frac{k\pi}{p}\right)=2I_p
	\end{equation*}

	Similarly, $(ii)$ follows from the change of summation variable $k\leftarrow p-k$ in $L_p$:
	\[
	L_p=\sum_{k=1}^{p-1}\frac{p-k}{\sin(k\pi/p)}=pI_p-L_p
	\]
	Also,
	\begin{align*}
		M_p&=\sum_{\substack{1\leq k<2p\\ k \text{ odd}
			}} k\cot\left(\frac{k\pi}{2p}\right)=\sum_{k=1}^{2p-1} k\cot\left(\frac{k\pi}{2p}\right)- \sum_{\substack{1\leq k<2p\\ k \text{ even}
		}} k\cot\left(\frac{k\pi}{2p}\right)\\
		&=\sum_{k=1}^{2p-1} k\cot\left(\frac{k\pi}{2p}\right)-2 \sum_{k=1}^{p-1} k\cot\left(\frac{k\pi}{p}\right)=J_{2p}-2J_p.
	\end{align*}
	But
	\begin{align*}
		J_{2p}&=\sum_{k=1}^{p-1} k\cot\left(\frac{k\pi}{2p}\right)+\sum_{k=p+1}^{2p-1} k\cot\left(\frac{k\pi}{2p}\right)\\
		&=\sum_{k=1}^{p-1} k\cot\left(\frac{k\pi}{2p}\right)-\sum_{k=1}^{p-1} (2p-k)\cot\left(\frac{k\pi}{2p}\right)\\
		&=2\sum_{k=1}^{p-1} k\cot\left(\frac{k\pi}{2p}\right)-2p\widetilde{K}_p
	\end{align*}
	Thus, using $(i)$ and the trigonometric identity $\cot(\theta/2)-\cot\theta=\csc\theta$ we obtain
	\begin{align*}
		M_p&=J_{2p}-2J_p=2\sum_{k=1}^{p-1} k\left(\cot\left(\frac{k\pi}{2p}\right)-\cot\left(\frac{k\pi}{p}\right)\right)
		-2pI_p\\
		&=2\sum_{k=1}^{p-1}k\csc\left(\frac{k\pi}{p}\right)-2pI_p=2L_p-2pI_p=-pI_p
	\end{align*}
	This concludes the proof of $(iii)$.
\end{proof}

\begin{proposition}\label{pr92}
	For $p\geq2$, let $I_p$ be the sum of cosecants defined by the \eqref{E:I}. Then
	\begin{align*}
		I_p&=-\frac{2\ln 2}{\pi}+\frac{2p}{\pi}E_p,\\
		&=-\frac{2\ln 2}{\pi}+\frac{2p}{\pi}\left(\ln p+\gamma-\ln(\pi/2)\right)+\frac{4p}{\pi}D_p,
	\end{align*}
	where $D_p$ and $E_p$ are defined by formul\ae~ \eqref{E:Dp} and \eqref{E:Ep} respectively.
\end{proposition}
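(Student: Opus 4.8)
The plan is to establish the first identity $I_p=-\frac{2\ln 2}{\pi}+\frac{2p}{\pi}E_p$ by computing $E_p$ as an explicit integral in which the sum of cosecants appears; the second identity is then immediate upon substituting the expression for $E_p$ furnished by Lemma \ref{lm84}.

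First I would recast $E_p$ as an ordinary Dirichlet-type series. Since $H_{p(n+1)}-H_{pn}=\sum_{j=pn+1}^{p(n+1)}1/j$, regrouping the alternating series \eqref{E:Ep} gives $E_p=\sum_{m=1}^\infty a_m/m$, where $(a_m)_{m\geq1}$ is the $2p$-periodic sign pattern with $a_m=+1$ for $m\equiv 1,\dots,p$ and $a_m=-1$ for $m\equiv p+1,\dots,2p\pmod{2p}$. This pattern has vanishing mean over a period, so the series converges by the Dirichlet test. Next I would pass to a generating function: setting $g(x)=\sum_{m\geq1}\frac{a_m}{m}x^m$ for $\abs{x}\leq1$, Abel's theorem identifies $E_p=g(1)$, while for $0<x<1$ termwise differentiation together with summation of the $2p$-periodic geometric blocks yields
\[
g'(x)=\sum_{m=1}^\infty a_m x^{m-1}=\frac{1-x^p}{(1-x)(1+x^p)}=\frac{1+x+\cdots+x^{p-1}}{1+x^p}.
\]
Because this last expression extends continuously to $x=1$ (with value $p/2$), the fundamental theorem of calculus gives $E_p=g(1)-g(0)=\int_0^1 g'(x)\,dx=\sum_{k=0}^{p-1}u_k$, where $u_k=\int_0^1\frac{x^k}{1+x^p}\,dx$.

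Then I would evaluate the $u_k$ by a reflection. The substitution $x\mapsto 1/x$ shows, for $0\leq k\leq p-2$, that $u_k+u_{p-2-k}=\int_0^\infty\frac{x^k}{1+x^p}\,dx=\dfrac{\pi/p}{\sin(\pi(k+1)/p)}$, the last equality being the standard Mellin integral (a Beta-function value). The top term is computed directly, $u_{p-1}=\int_0^1\frac{x^{p-1}}{1+x^p}\,dx=\frac{\ln 2}{p}$. Summing the reflection identity over $k=0,\dots,p-2$, which is an involution $k\mapsto p-2-k$ of that range, gives $2\sum_{k=0}^{p-2}u_k=\frac{\pi}{p}\sum_{j=1}^{p-1}\csc(j\pi/p)=\frac{\pi}{p}I_p$, whence
\[
E_p=\sum_{k=0}^{p-2}u_k+u_{p-1}=\frac{\pi}{2p}\,I_p+\frac{\ln 2}{p}.
\]
Rearranging gives the first asserted identity, and substituting $E_p=\ln p+\gamma-\ln(\pi/2)+2D_p$ from Lemma \ref{lm84} gives the second.

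I expect the main obstacle to be the rigorous justification of the passage from the conditionally convergent series $E_p=\sum a_m/m$ to the integral $\int_0^1 g'$: one must invoke Abel summation to identify $g(1)$ with $E_p$, and verify that $g$ is continuous on $[0,1]$ and of class $C^1$ on $(0,1)$ with derivative extending continuously to the endpoint, so that the closed form of $g'$ — where the individual geometric blocks blow up at $x=1$ but their combination stays bounded — may legitimately be integrated over $[0,1]$. The remaining ingredients, namely the block summation of the generating function, the $x\mapsto 1/x$ reflection, and the Mellin evaluation, are routine.
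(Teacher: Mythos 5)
Your proof is correct, but it runs in the opposite direction from the paper's and rests on a different key tool, so it is a genuinely distinct argument. The paper starts from $I_p$: it expands each term $\pi\csc(\pi k/p)$ via the Mittag-Leffler partial-fraction expansion $\pi/\sin(\pi\alpha)=\sum_{n\in\ent}(-1)^n/(\alpha-n)$, sums over $k=1,\dots,p-1$, and regroups the resulting conditionally convergent double series into harmonic-number blocks, arriving at $\frac{\pi}{p}I_p+\frac{2\ln 2}{p}=2E_p$; Lemma~\ref{lm84} then gives the second line, exactly as in your last step. You instead start from $E_p$: after ungrouping it into the $2p$-periodic Dirichlet series $\sum_m a_m/m$ (legitimate, since the ungrouped series converges by the Dirichlet test and grouping preserves sums), you use Abel's theorem and the fundamental theorem of calculus to write $E_p=\int_0^1\frac{1+x+\cdots+x^{p-1}}{1+x^p}\,dx$, and the cosecants enter through the Beta/Mellin evaluation $\int_0^\infty\frac{x^k}{1+x^p}\,dx=\frac{\pi}{p}\csc\left(\frac{(k+1)\pi}{p}\right)$ for $0\le k\le p-2$, combined with the reflection $x\mapsto 1/x$ and the separate computation $u_{p-1}=\frac{\ln 2}{p}$. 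The two routes are morally dual --- the Mellin integral is the integral counterpart of the cosecant partial-fraction expansion --- but yours trades the paper's rearrangement bookkeeping for Abel summation plus integration of a rational function that stays bounded on $[0,1]$, which is arguably the cleaner analytic justification, while the paper's is shorter if one takes the Mittag-Leffler expansion for granted. All the steps you flagged as delicate (identifying $g(1)$ with $E_p$ by Abel's theorem, the continuous extension of $g'$ to the endpoint so that the fundamental theorem of calculus applies, and the involution $k\mapsto p-2-k$ of $\{0,\dots,p-2\}$) check out, so the proposal is complete.
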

\begin{proof}
	Indeed, our starting point will be the ``simple fractions''  expansion \cite[Chapter 5, \S 2]{ahl} of the cosecant function:
	\[
	\frac{\pi}{\sin(\pi\alpha)}=\sum_{n\in\ent}\frac{(-1)^n}{\alpha-n}=\frac{1}{\alpha}+\sum_{n=1}^\infty(-1)^n\left(\frac{1}{\alpha-n}+
	\frac{1}{\alpha+n}\right)
	\]
	which is valid for $\alpha\in\comp\setminus\ent$. Using this formula with $\alpha=k/p$ for  $k=1,2,\ldots,p-1$ and adding, we conclude that
	\begin{align*}
		\frac{\pi}{p} I_p&=\sum_{k=1}^{p-1}\frac{1}{k}+\sum_{n=1}^\infty(-1)^n\sum_{k=1}^{p-1}\left(\frac{1}{k-np}+
		\frac{1}{k+n p}\right)\\
		&=\sum_{k=1}^{p-1}\frac{1}{k}+ \sum_{n=1}^\infty(-1)^n\left(-\sum_{j=p(n-1)+1}^{pn-1}\frac{1}{j}+
		\sum_{j=pn+1}^{p(n+1)-1}\frac{1}{j}\right),
	\end{align*}
	and this result can be expressed in terms of the Harmonic numbers as follows
	\begin{align*}
		\frac{\pi}{p} I_p&=H_{p-1}+ \sum_{n=1}^\infty(-1)^n\left(- H_{pn-1}+H_{p(n-1)}+H_{p(n+1)-1}-H_{pn}
		\right)\\
		&=H_{p-1}+ \sum_{n=1}^\infty(-1)^n\left(H_{p(n+1)}-2H_{pn}+H_{p(n-1)}\right)
		+\frac{1}{p}\sum_{n=1}^\infty(-1)^n\left(\frac{1}{n}-\frac{1}{n+1}
		\right)\\
		&=H_{p-1}+\sum_{n=1}^\infty(-1)^n\left(H_{p(n+1)}-2H_{pn}+H_{p(n-1)}\right)
		+\frac{1}{p}\left(\sum_{n=1}^\infty\frac{(-1)^n}{n}+\sum_{n=2}^\infty\frac{(-1)^n}{n}\right)\\
		&=H_{p}+\sum_{n=1}^\infty(-1)^n\left(H_{p(n+1)}-2H_{pn}+H_{p(n-1)}\right)
		-\frac{2}{p}\sum_{n=1}^\infty(-1)^{n-1}\frac{1}{n}\\
		&=H_{p}-\frac{2\ln 2}{p}+\sum_{n=1}^\infty(-1)^n\left(H_{p(n+1)}-2H_{pn}+H_{p(n-1)}\right).
	\end{align*}
	Thus
	\begin{align*}
		\frac{\pi}{p} I_p+\frac{2\ln 2}{p}&=
		H_{p}+\sum_{n=1}^\infty(-1)^n\left(H_{p(n+1)}-H_{pn} \right)
		+\sum_{n=1}^\infty(-1)^n\left(H_{p(n-1)}-H_{pn}\right)\\
		&=\sum_{n=0}^\infty(-1)^n\left(H_{p(n+1)}-H_{pn}\right)
		+\sum_{n=1}^\infty(-1)^n\left(H_{p(n-1)}-H_{pn}\right)\\
		&=E_p+E_p=2E_p,
	\end{align*}
	and the desired formula follows according to Lemma~\ref{lm84}.
\end{proof}

 Combining Proposition~\ref{pr92} and Proposition~\ref{pr83}, we obtain:
\begin{proposition}\label{pr93} 
	For $p\geq2$ and $m\geq 1$, we have
	\[
	\pi I_p= 2p\ln p+2(\gamma-\ln( \pi/2))p -\sum_{k=1}^{m-1} \frac{2b_{2k}\eta(2k)}{ k \cdot p^{2k-1}}
	+(-1)^m\frac{2\eta(2m)}{ m\cdot p^{2m-1}}\eps'_{p,m}
	\]
	with $\ds 0<\eps'_{p,m}<\abs{b_{2m}}$.
\end{proposition}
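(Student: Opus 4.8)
The plan is to eliminate $D_p$ between the two results already in hand. Proposition~\ref{pr92} supplies the clean identity
\[
I_p=-\frac{2\ln 2}{\pi}+\frac{2p}{\pi}\left(\ln p+\gamma-\ln(\pi/2)\right)+\frac{4p}{\pi}D_p,
\]
in which every dependence on the harmonic-number series is concentrated in the single remainder-carrying quantity $\frac{4p}{\pi}D_p$. First I would multiply through by $\pi$, so that $\pi I_p$ appears as the two main terms $-2\ln 2+2p(\ln p+\gamma-\ln(\pi/2))$ plus $4pD_p$; then I would substitute for $D_p$ the finite expansion furnished by Proposition~\ref{pr83}.

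The remaining work is purely algebraic bookkeeping of the factor $4p$. Writing out $4pD_p$ from Proposition~\ref{pr83}, the leading term $\frac{\ln 2}{2p}$ contributes $4p\cdot\frac{\ln 2}{2p}=2\ln 2$, which I expect to cancel exactly against the $-2\ln 2$ standing in the $I_p$ identity; this single cancellation is the one point to verify with care, since it is precisely what renders the final formula free of $\ln 2$. Each summand $\frac{b_{2k}\eta(2k)}{2k\,p^{2k}}$ becomes $4p\cdot\frac{b_{2k}\eta(2k)}{2k\,p^{2k}}=\frac{2b_{2k}\eta(2k)}{k\,p^{2k-1}}$, shifting the power of $p$ from $2k$ to $2k-1$ and matching the stated sum $\sum_{k=1}^{m-1}\frac{2b_{2k}\eta(2k)}{k\cdot p^{2k-1}}$.

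Finally, the remainder term transfers verbatim: multiplying $(-1)^m\frac{\eta(2m)}{2m\,p^{2m}}\eps'_{p,m}$ by $4p$ yields $(-1)^m\frac{2\eta(2m)}{m\cdot p^{2m-1}}\eps'_{p,m}$, and because it is the very same quantity $\eps'_{p,m}$ that appears, its bounds $0<\eps'_{p,m}<\abs{b_{2m}}$ established in Proposition~\ref{pr83} carry over without any change. Assembling these contributions reproduces exactly the claimed expansion for $\pi I_p$. There is no genuine obstacle here: the only care required is to track the factor $4p$ consistently through every term and to confirm the $\ln 2$ cancellation, after which the identity is immediate.
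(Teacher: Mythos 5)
Your proposal is correct and is precisely the argument the paper intends: the paper states Proposition~\ref{pr93} with the single line ``Combining Proposition~\ref{pr92} and Proposition~\ref{pr83}, we obtain,'' and your computation (the cancellation of $2\ln 2$, the shift of $p^{2k}$ to $p^{2k-1}$ under the factor $4p$, and the verbatim transfer of $\eps'_{p,m}$ with its bounds) is exactly that combination carried out explicitly.
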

 Using the well-known result (\cite{ wis},\cite[Formula 9.542]{grad}):
 \[
 \eta(2k)=(1-2^{1-2k})\zeta(2k)=\frac{(2^{2k-1}-1)\pi^{2k}(-1)^{k-1}b_{2k}}{(2k)!},
 \]
 and considering separately the cases $m$ even and $m$ odd we obtain
the following corollary.

\begin{corollary}\label{cor94}
	For every positive integer $p$ and every nonnegative integer $n$, the sum of cosecants $I_p$ defined by \eqref{E:I} satisfies the following
	inequalities:
	\begin{align*}
		I_p&<\frac{2p}{\pi}(\ln p+\gamma-\ln(\pi/2))+
		\sum_{k=1}^{2n}(-1)^{k}\frac{(2^{2k}-2)b_{2k}^2}{k\cdot(2k)!}\left(\frac{\pi}{p}\right)^{2k-1},\\
		\noalign{\text{and}}
		I_p&>\frac{2p}{\pi}(\ln p+\gamma-\ln(\pi/2))+
		\sum_{k=1}^{2n+1}(-1)^{k}\frac{(2^{2k}-2)b_{2k}^2}{k\cdot(2k)!}\left(\frac{\pi}{p}\right)^{2k-1}.
	\end{align*}
\end{corollary}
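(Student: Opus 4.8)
The plan is to regard the two inequalities as nothing more than Proposition~\ref{pr93} read through the closed form of $\eta(2k)$, with the direction of each inequality dictated by the parity of the truncation index. First I would substitute the stated identity $\eta(2k)=\frac{(2^{2k-1}-1)\pi^{2k}(-1)^{k-1}b_{2k}}{(2k)!}$ into every summand of Proposition~\ref{pr93}. Using $2(2^{2k-1}-1)=2^{2k}-2$ and $\pi^{2k}/p^{2k-1}=\pi(\pi/p)^{2k-1}$, the $k$-th term collapses to
\[
\frac{2b_{2k}\eta(2k)}{k\cdot p^{2k-1}}=\frac{(2^{2k}-2)(-1)^{k-1}b_{2k}^2}{k\cdot(2k)!}\,\pi\left(\frac{\pi}{p}\right)^{2k-1}.
\]
Dividing the whole of Proposition~\ref{pr93} by $\pi$ and using $-(-1)^{k-1}=(-1)^k$, I would obtain the exact identity
\[
I_p=\frac{2p}{\pi}(\ln p+\gamma-\ln(\pi/2))+\sum_{k=1}^{m-1}(-1)^k\frac{(2^{2k}-2)b_{2k}^2}{k\cdot(2k)!}\left(\frac{\pi}{p}\right)^{2k-1}+(-1)^m\frac{2\eta(2m)}{\pi m\,p^{2m-1}}\eps'_{p,m},
\]
valid for every $m\geq1$ and $p\geq2$, whose leading term and whose summands already coincide with those in the statement.

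Next I would settle the sign of the remainder $(-1)^m\frac{2\eta(2m)}{\pi m\,p^{2m-1}}\eps'_{p,m}$. Since $\eta(2m)=(1-2^{1-2m})\zeta(2m)>0$ and $0<\eps'_{p,m}<\abs{b_{2m}}$ by Proposition~\ref{pr93}, the positive factor multiplying $(-1)^m$ is strictly positive, so the remainder carries exactly the sign of $(-1)^m$. To produce the upper bound with the sum running to $2n$, I take $m=2n+1$ (odd), whence the remainder is negative and the identity forces $I_p<\frac{2p}{\pi}(\ln p+\gamma-\ln(\pi/2))+\sum_{k=1}^{2n}(\cdots)$. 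To produce the lower bound with the sum running to $2n+1$, I take $m=2n+2$ (even), whence the remainder is positive and the reversed inequality follows with the sum running to $2n+1$. The endpoint $n=0$ corresponds to $m=1$ and $m=2$, both admissible since Proposition~\ref{pr93} holds for all $m\geq1$.

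Finally I would dispose of the value $p=1$, which lies outside the range $p\geq2$ of Proposition~\ref{pr93}. Here $I_1=0$ as an empty sum, and since the Bernoulli numbers grow factorially the truncated series at $p=1$ diverges, so a direct verification of the displayed inequalities for all $n$ is awkward; instead I would note that the identity of Proposition~\ref{pr93} \emph{persists} at $p=1$. Indeed Proposition~\ref{pr83} is already stated for every $p\geq1$, and the direct computation $E_1=\sum_{n\geq0}(-1)^n(H_{n+1}-H_n)=\ln2$, together with $I_1=0$, confirms the identity $I_1=-\frac{2\ln2}{\pi}+\frac{2}{\pi}E_1$ of Proposition~\ref{pr92} at $p=1$; combining these through Lemma~\ref{lm84} reproduces the exact identity above for $p=1$, so the parity argument applies verbatim. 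I expect this boundary value to be the only genuine subtlety: the remainder of the argument is the mechanical substitution of the $\eta$-values and the single observation that the positivity $\eps'_{p,m}>0$ from Proposition~\ref{pr93}, coupled with $\eta(2m)>0$, determines the sign of the error term.
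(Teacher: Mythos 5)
Your proof is correct and is essentially the paper's own argument: the corollary is obtained exactly as you describe, by substituting the closed form $\eta(2k)=\frac{(2^{2k-1}-1)\pi^{2k}(-1)^{k-1}b_{2k}}{(2k)!}$ into Proposition~\ref{pr93}, dividing by $\pi$, and choosing $m=2n+1$ (odd) for the upper bound and $m=2n+2$ (even) for the lower bound, the sign of the remainder being fixed by $\eta(2m)>0$ and $\eps'_{p,m}>0$. Your explicit treatment of $p=1$ (via $I_1=0$, $E_1=\ln 2$, Lemma~\ref{lm84} and Proposition~\ref{pr83}) is a small bonus: the paper states the corollary for every positive integer $p$ while Proposition~\ref{pr93} is only asserted for $p\geq 2$, and your argument closes that boundary case which the paper leaves implicit.
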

\bg
\qquad As an example, for $n=0$ we obtain the following inequality, valid for every $p\geq1$:
\[
\frac{2p}{\pi}(\ln p+\gamma-\ln(\pi/2))-\frac{\pi}{36p}
<I_p<\frac{2p}{\pi}(\ln p+\gamma-\ln(\pi/2)).
\]
This answers positively the open problem proposed in \cite[Section 7.4]{chen2}.
\bg
\begin{remark}
	The asymptotic expansion of $I_p$ was proposed as an exercise in \cite[Exercise~13, p.~460]{hen}, and 
	it was attributed to P. Waldvogel, but the result there is less precise than Corollary~\ref{cor94} because here we have inequalities valid in the whole range of $p$.
\end{remark}
\bg
 Now we turn our attention to the other trigonometric sum $J_p$.
 The first step
to we find an analogous result to Proposition~\ref{pr92} for the trigonometric sum $J_p$, is the next lemma, where an asymptotic expansion for $J_p$ is proved but it has a harmonic number as an undesired term, later it will be removed.

\begin{lemma}\label{pr72} For every positive integers $p$, there is a real number $\theta_{p}\in(0,1)$ such that
\[
\pi J_p=-p^2H_p+\ln(2\pi) p^2-\frac{p}{2}-\theta_p.
\]
\end{lemma}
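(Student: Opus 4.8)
The plan is to read $J_p$ as a Riemann sum and apply the Euler--Maclaurin formula of Theorem~\ref{cor61}, after first stripping off the boundary singularity that produces the harmonic number. Writing $g(x)=x\cot(\pi x)$ we have $J_p=p\sum_{k=1}^{p-1}g(k/p)$, and $g$ is analytic on $[0,1)$ with a single simple pole at $x=1$ whose principal part is $-\frac{1}{\pi(1-x)}$. First I would set
\[
h(x)=x\cot(\pi x)+\frac{1}{\pi(1-x)},
\]
which extends to a real-analytic function on $[0,1]$, and record the data $h(0)=2/\pi$, $h(1)=1/\pi$, $h'(0)=1/\pi$, $h'(1)=-\pi/3$. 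Since $\frac{1}{\pi(1-k/p)}=\frac{p}{\pi(p-k)}$, the subtracted poles sum to $\sum_{k=1}^{p-1}\frac{p}{\pi(p-k)}=\frac{p}{\pi}H_{p-1}$, so that
\[
J_p=p\sum_{k=1}^{p-1}h(k/p)-\frac{p^2}{\pi}H_{p-1}.
\]
This already isolates the $-p^2H_p$ term (via $H_p=H_{p-1}+1/p$), and what remains is an honest Riemann sum of the \emph{bounded} function $h$.

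Next I would evaluate $\int_0^1 h$. Integrating $x\cot(\pi x)$ by parts against $v=\frac1\pi\ln\sin(\pi x)$ and using the classical value $\int_0^1\ln\sin(\pi x)\,dx=-\ln 2$, the divergent boundary contribution at $x=1$ cancels exactly against $\int_0^1\frac{dx}{\pi(1-x)}$, leaving $\int_0^1 h(x)\,dx=\ln(2\pi)/\pi$. I would then apply Theorem~\ref{cor61} with $m=1$ to each $f_k(t)=h\!\left(\frac{k-1+t}{p}\right)$ on $[0,1]$ for $k=1,\dots,p$ and add; the sum of Bernoulli correction terms is empty at $m=1$, and the endpoint values assemble into the trapezoidal sum, giving
\[
\sum_{k=1}^{p-1}h(k/p)=p\int_0^1 h-\frac{h(0)+h(1)}{2}-R,\qquad R=\sum_{k=1}^{p}R_1^{(k)}.
\]
Multiplying by $p$, inserting $\int_0^1 h=\ln(2\pi)/\pi$ and $h(0)+h(1)=3/\pi$, and converting $H_{p-1}$ to $H_p$, yields exactly $\pi J_p=-p^2H_p+\ln(2\pi)p^2-\frac p2-\pi pR$, so I would set $\theta_p:=\pi pR$.

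It then remains to show $0<\theta_p<1$. Positivity is immediate from $R_1^{(k)}\geq 0$ in Theorem~\ref{cor61}, and is strict since $h$ is strictly concave. For the upper bound I would use the companion estimate $R_1^{(k)}\leq\frac{6}{(2\pi)^2}\big(f_k'(0)-f_k'(1)\big)=\frac{6}{4\pi^2 p}\big(h'((k-1)/p)-h'(k/p)\big)$, which telescopes over $k$ to $R\leq\frac{6}{4\pi^2 p}\big(h'(0)-h'(1)\big)$; hence
\[
\theta_p=\pi pR\leq\frac{3}{2\pi}\big(h'(0)-h'(1)\big)=\frac12+\frac{3}{2\pi^2}<1,
\]
which is a $p$-independent bound, valid for every $p\geq 1$ (the empty case $p=1$ being consistent with $H_0=0$).

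The hard part is verifying the hypothesis of Theorem~\ref{cor61} at $m=1$, namely that each $f_k'$ is decreasing, i.e.\ that $h$ is concave on $[0,1]$ ($h''\leq 0$). From
\[
h''(x)=-2\pi\csc^2(\pi x)+2\pi^2 x\,\csc^2(\pi x)\cot(\pi x)+\frac{2}{\pi(1-x)^3},
\]
the two singular contributions cancel at $x=0$, and at $x=1$ the $(1-x)^{-3}$ singularities cancel while a negative $(1-x)^{-2}$ term survives, so the endpoints pose no difficulty; the genuine work is to prove $h''\leq 0$ throughout $(0,1)$, which I expect to require a careful (but elementary) estimate rather than a one-line argument. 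Should concavity fail on part of the interval, the fallback is to apply Theorem~\ref{cor61} with a larger $m$ and absorb the explicit Bernoulli terms, but the clean form of the statement strongly suggests the $m=1$ (trapezoidal) version suffices.
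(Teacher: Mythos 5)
Your proposal follows, step for step, the same route as the paper: your $h$ is exactly the paper's auxiliary function $\vf(x)=\pi x\cot(\pi x)+\frac{1}{1-x}$ divided by $\pi$; the decomposition that peels off $\frac{p}{\pi}H_{p-1}$ from the pole at $x=1$, the evaluation $\int_0^1 h=\ln(2\pi)/\pi$ via $\int_0^1\ln\sin(\pi t)\,dt=-\ln 2$, the trapezoidal ($m=1$) application of Theorem~\ref{cor61} on the subintervals $[(k-1)/p,k/p]$, and the telescoping estimate $\theta_p\le\frac{3}{2\pi}\bigl(h'(0)-h'(1)\bigr)=\frac12+\frac{3}{2\pi^2}<1$ are all identical to the paper's computation (the paper phrases the bound as $\frac{3+\pi^2}{2\pi^2p}<\frac1p$ before multiplying by $p$, which is the same number). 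All the algebra you actually carry out is correct, including the endpoint values of $h$ and $h'$.

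However, there is one genuine gap, and you flagged it yourself: the whole argument hinges on the hypothesis of Theorem~\ref{cor61}, namely that $h$ is concave on $[0,1]$, and you leave this unproven, offering only the expectation that a careful elementary estimate will work. The trigonometric expression you write for $h''$ is indeed awkward to handle directly, because of the cancelling singularities at both endpoints. The missing idea is the paper's: work from the partial fraction expansion $\pi\cot(\pi x)=\frac1x+\sum_{n\ge1}\left(\frac{1}{x-n}+\frac{1}{x+n}\right)$ rather than from the trigonometric form. The subtracted pole merges with the $n=1$ term, since $\frac{x}{x-1}+\frac{1}{1-x}=1$, giving
\[
\pi h(x)=\vf(x)=2+\frac{x}{x+1}+\sum_{n=2}^\infty\left(\frac{x}{x-n}+\frac{x}{x+n}\right),
\]
valid and analytic on $(-1,2)$; differentiating twice term by term yields
\[
\vf''(x)=-\frac{2}{(1+x)^3}-2\sum_{n=2}^\infty\left(\frac{n}{(n-x)^3}+\frac{n}{(n+x)^3}\right)<0
\quad\text{on }(-1,2),
\]
a sum of manifestly negative terms, so strict concavity (which you also need for $\theta_p>0$) is immediate. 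Note that your fallback of taking a larger $m$ in Theorem~\ref{cor61} does not evade the problem: for $m\ge2$ you would need $h^{(2m-1)}$ decreasing, i.e.\ a sign condition on a still higher derivative of $h$, and the series expansion is again the natural tool for that. So your proposal is the paper's proof with its single decisive lemma left unproven.
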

\begin{proof}
Indeed, let $\vf$ be the function defined by 
\[\vf(x)=\pi x\cot(\pi x)+\frac{1}{1-x}.\]
According to the partial fraction expansion formula for the cotangent function \cite[Chapter 5, \S 2]{ahl} we know that 
\[
\vf(x)=2+\frac{x}{x+1}+\sum_{n=2}^\infty\left(\frac{x}{x-n}+\frac{x}{x+n}\right).
\]
Thus, $\vf$ is defined and analytic on the interval $(-1,2)$. Let us show that $\vf$ is concave on this interval. Indeed, it is straight forward to check that, for $-1<x<2$ we have
\begin{align*}
\vf^{\prime\prime}(x)&=-\frac{2}{(1+x)^3}-2
\sum_{n=2}^\infty\left(\frac{n}{(n-x)^3}+\frac{n}{(n+x)^3}\right)<0.
\end{align*}
So, we can use Theorem \ref{cor61} with $m=1$ applied to the function $x\mapsto\vf\left(\frac{x+k}{p}\right)$ for $1\le k<p$ to get
\[
0< p\int_{k/p}^{(k+1)/p}\vf(x)dx-\frac{1}{2}\left(\vf\left(\frac{k+1}{p}\right)+\vf\left(\frac{k}{p}\right)\right)\le\frac{3}{2p\pi^2}
\left(\vf'\left(\frac{k}{p}\right)-\vf'\left(\frac{k+1}{p}\right)\right)
\]
Adding these inequalities and noting that $\vf(0)=2$, $\vf'(0)=1$,
$\vf(1)=1$ and $\vf'(1)=-\pi^2/3$, we get
\[
0< p\int_0^1\vf(x)dx-\frac{\pi}{p}J_p-pH_p-\frac{1}{2}\le\frac{3+\pi^2}{2\pi^2p}<\frac{1}{p}
\]
Also, for $x\in[0,1)$, we have
\[
\int_0^x\vf(t)\,dt=-\ln(1-x)+x\ln \sin(\pi x)-\int_0^x \ln\sin(\pi t)\,dt
\]
and, letting $x$ tend to $1$ we obtain
\[
\int_0^1\vf(t)\,dt=\ln(\pi)-\int_0^1\ln\sin(\pi t)\,dt=\ln(2\pi)
\]
where we used the fact   $\int_0^1\ln\sin(\pi t)\,dt=-\ln2$, (see \cite[4.224 Formula 3.]{grad}. So, we have proved that
\[
0< p\ln(2\pi)-\frac{\pi}{p}J_p-pH_p-\frac{1}{2}<\frac{1}{p}
\]
which is equivalent to the desired conclusion.
\end{proof}
\bg
The next proposition gives an analogous result to Proposition~\ref{pr92} for the trigonometric sum $J_p$. 
\bg
\begin{proposition}\label{pr95}
	For a positive integer $p$, 
	let $J_p$ be the sum of cotangents defined by \eqref{E:J}. Then
	\[
	\pi J_p= -p^2\ln p+(\ln(2\pi)-\gamma)p^2 -p+2p^2C_p
	\]
	where $C_p$ is given by \eqref{E:Cp}.
\end{proposition}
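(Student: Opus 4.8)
The plan is to mimic the proof of Proposition~\ref{pr92}, replacing the partial fraction expansion of the cosecant by the analogous one for the cotangent. Starting from $\pi\cot(\pi\alpha)=\frac1\alpha+\sum_{n=1}^\infty\frac{2\alpha}{\alpha^2-n^2}$ (via the same reference \cite[Chapter 5, \S 2]{ahl}), I would set $\alpha=k/p$, multiply by $k$, and sum over $k=1,\dots,p-1$. Since each inner series converges absolutely (its general term is $\mathcal{O}(1/n^2)$) and the $k$-sum is finite, the interchange of summations is legitimate and yields
\[
\pi J_p=p(p-1)+p\sum_{n=1}^\infty S_n,\qquad S_n=\sum_{k=1}^{p-1}k\left(\frac{1}{k-np}+\frac{1}{k+np}\right).
\]

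Next I would evaluate $S_n$ in closed form. Writing $\frac{k}{k\mp np}=1\pm\frac{np}{k\mp np}$ and telescoping the sums $\sum_{k}\frac{1}{k\pm np}$ into harmonic numbers (exactly as the $k$-sums are converted in Proposition~\ref{pr92}), one obtains after simplification
\[
S_n=2p-\frac{1}{n+1}-np\bigl(H_{(n+1)p}-H_{(n-1)p}\bigr),
\]
valid for $n\ge1$ with the convention $H_0=0$. The extra factor $k$ inside $J_p$ is what makes this step heavier than the cosecant case. To sum the $S_n$ I would pass to the partial sum $T_N=\sum_{n=1}^N S_n$, which is a finite sum and may be rearranged freely. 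Setting $u_m=H_{mp}-H_{(m-1)p}$ and applying Abel summation through $\sum_{m=1}^N(2m-1)u_m=(2N-1)H_{Np}-2\sum_{m=1}^{N-1}H_{mp}$ gives the exact identity
\[
T_N=2pN-(H_{N+1}-1)-p\Bigl((2N-1)H_{Np}-2\!\sum_{m=1}^{N-1}\!H_{mp}+N\bigl(H_{(N+1)p}-H_{Np}\bigr)\Bigr).
\]

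Finally I would introduce $C_p$ and pass to the limit. Using $H_{mp}=\ln(pm)+\gamma+\frac1{2pm}+c_{pm}$ yields $\sum_{m=1}^{N-1}H_{mp}=\sum_{m=1}^{N-1}c_{pm}+(N-1)\ln p+\ln\Gamma(N)+(N-1)\gamma+\frac{1}{2p}H_{N-1}$, so that $2p\sum_{m=1}^{N-1}c_{pm}\to 2pC_p$ as $N\to\infty$. Substituting this and expanding $H_{Np}$, $H_{(N+1)p}$, $H_{N\pm1}$ by Lemma~\ref{pr81} together with $\ln\Gamma(N)$ by Stirling's formula, the expectation is that every term growing with $N$ (the $N\ln N$, $N$, and $\ln N$ contributions) cancels, leaving $\lim_{N\to\infty}T_N=p(\ln(2\pi)-\ln p-\gamma-1)+2pC_p$. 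The main obstacle is precisely this bookkeeping: verifying that all divergent pieces annihilate and that the surviving constant is $p(\ln(2\pi)-\ln p-\gamma-1)$, the $\ln(2\pi)$ entering through the $\frac12\ln(2\pi)$ term of Stirling (the very constant produced in Lemma~\ref{pr72} from $\int_0^1\ln\sin(\pi t)\,dt=-\ln2$). Inserting the limit into $\pi J_p=p(p-1)+p\lim_{N\to\infty}T_N$ and simplifying then gives $\pi J_p=-p^2\ln p+(\ln(2\pi)-\gamma)p^2-p+2p^2C_p$, as claimed; equivalently, this is exactly the identity that replaces the undesired $H_p$ term of Lemma~\ref{pr72} by the series $C_p$.
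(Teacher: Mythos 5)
Your proof is correct, but it follows a genuinely different route from the paper's. The paper never touches the cotangent expansion directly: instead it sets $F_p=\frac{\ln p+\gamma-\ln(2\pi)}{2}+\frac{1}{2p}+\frac{\pi}{2p^2}J_p$ (the quantity the proposition asserts equals $C_p$) and shows that $C_p$ and $F_p$ satisfy the \emph{same} doubling relation: $C_p-2C_{2p}=\widetilde{C}_p$ by splitting the series over even and odd indices, while $F_p-2F_{2p}$ equals the same expression $\frac{\ln(\pi/2)-\gamma-\ln p}{2}+\frac{\pi}{4p}I_p$ by Lemma~\ref{lm91}~$(iii)$ and Proposition~\ref{pr92}. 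Iterating gives $C_p-F_p=2^m(C_{2^mp}-F_{2^mp})$, and since both $C_p$ and $F_p$ are $\mathcal{O}(1/p^2)$ (the latter via the rough Euler--Maclaurin estimate of Lemma~\ref{pr72}), letting $m\to\infty$ forces $C_p=F_p$. Your argument instead mimics the proof of Proposition~\ref{pr92}: expand $\pi\cot(\pi\alpha)$ in simple fractions, reduce to the partial sums $T_N$, and extract the limit by Stirling's formula. I verified your intermediate identities — the closed form $S_n=2p-\frac{1}{n+1}-np\bigl(H_{(n+1)p}-H_{(n-1)p}\bigr)$, the Abel-summed expression for $T_N$, and the cancellation of all divergent terms, which indeed leaves $\lim_{N\to\infty}T_N=p(\ln(2\pi)-\ln p-\gamma-1)+2pC_p$ and hence the claimed identity — so the bookkeeping you flagged as the main obstacle does close. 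The trade-off: your proof is self-contained and direct, needing neither Proposition~\ref{pr92}, nor Lemma~\ref{lm91}~$(iii)$, nor Lemma~\ref{pr72} (Stirling's constant $\tfrac12\ln(2\pi)$ replaces the role played there by $\int_0^1\ln\sin(\pi t)\,dt=-\ln 2$); the paper's proof avoids all asymptotic bookkeeping by a rigidity argument, at the cost of depending on the previously established machinery for $I_p$.
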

\begin{proof}
	Recall that $c_{n}=H_n-\ln n-\gamma-\frac{1}{2n}$ satisfies
	$c_n=\mathcal{O}(1/n^2)$. Thus, both series 
	\[
	C_p=\sum_{n=1}^\infty c_{pn}\quad\text{ and }\quad \widetilde{C}_p=\sum_{n=1}^\infty(-1)^{n-1} c_{pn}
	\] are convergent. Further, we note that $\widetilde{C}_p=D_p-\frac{\ln 2}{2p}$ where $D_p$ is defined by \eqref{E:Dp}.
	
	According to Proposition~\ref{pr92} we have
	\begin{equation}\label{E:pr991}
		\widetilde{C}_p =\frac{\ln(\pi/2)-\gamma-\ln p}{2}+\frac{\pi}{4p}I_p.
	\end{equation}
	Now, noting that
	\begin{align*}
		C_p&=\sum_{\substack{n\geq1\\
				n\,\text{odd}}}c_{pn}
		+\sum_{\substack{n\geq1\\
				n\,\text{even}}}c_{pn}
		=\sum_{\substack{n\geq1\\
				n\,\text{odd}}}c_{pn}+\sum_{n=1}^\infty c_{2pn}\\
		\widetilde{C}_p&=\sum_{\substack{n\geq1\\
				n\,\text{odd}}}c_{pn}
		-\sum_{\substack{n\geq1\\
				n\,\text{even}}}c_{pn}
		=\sum_{\substack{n\geq1\\
				n\,\text{odd}}}c_{pn}-\sum_{n=1}^\infty c_{2pn}
	\end{align*}
	we conclude that $C_p-\widetilde{C}_p=2C_{2p}$, or equivalently
	\begin{equation}\label{E:pr992}
		C_p-2C_{2p}=\widetilde{C}_p
	\end{equation}
	On the other hand, for a positive integer $p$ let us define $F_p$ by
	\begin{equation}\label{E:Fp}
		F_p=\frac{\ln p+\gamma-\ln(2\pi)}{2}+\frac{1}{2p}+\frac{\pi}{2p^2}J_p.
	\end{equation}
	It is easy to check, using Lemma~\ref{lm91} $(iii)$, that
	\begin{align}\label{E:pr994}
		F_p-2F_{2p}&=\frac{\ln(\pi/2)-\ln p-\gamma}{2} -\frac{\pi}{4p^2}(J_{2p}-2J_p)\notag\\
		&=\frac{\ln(\pi/2)-\ln p-\gamma}{2} +\frac{\pi}{4p}I_p
	\end{align}
	We conclude from \eqref{E:pr992} and \eqref{E:pr994} that $C_p-2C_{2p}=F_p-2F_{2p}$, or equivalently
	\[C_p-F_p=2(C_{2p}-F_{2p}).\]
	Hence, 
	\begin{equation}\label{E:pr995}
		\forall\,m\geq1,\qquad C_p-F_p=2^m(C_{2^mp}-F_{2^mp})
	\end{equation}
	Now, using  Lemma~\ref{pr81} to replace $H_p$ in Lemma~\ref{pr72},  we obtain
	\begin{align*}
		\frac{\pi}{p^2}J_p&=\ln(2\pi)-H_p-\frac{1}{2p}+\mathcal{O}\left(\frac{1}{p^2}\right)\\
		&=\ln(2\pi)-\ln p-\gamma-\frac{1}{p} +\mathcal{O}\left(\frac{1}{p^2}\right)
	\end{align*}
	Thus $F_p=\mathcal{O}\left(\frac{1}{p^2}\right)$. Similarly, from the fact that $c_n=\mathcal{O}\left(\frac{1}{n^2}\right)$
	we conclude also that $C_p=\mathcal{O}\left(\frac{1}{p^2}\right)$. Consequently, there exists a constant $\kappa$  such that, for large values of $p$
	we have $\abs{C_p-F_p}\leq \kappa/p^2$. So, from \eqref{E:pr995}, we see that for large values of $m$ we have
	\[
	\abs{C_p-F_p}\leq\frac{\kappa}{2^mp^2}
	\]
	and letting $m$ tend to $+\infty$ we obtain $C_p=F_p$, which is equivalent to the announced result.
\end{proof}
\bg
\qquad Combining Proposition~\ref{pr95} and Proposition~\ref{pr82}, we obtain:
\begin{proposition}\label{pr96} 
	For $p\geq2$ and $m\geq 1$, we have
	\[
	\pi J_p= -p^2\ln p+(\ln(2\pi)-\gamma)p^2 -p -\sum_{k=1}^{m-1}\frac{b_{2k}\zeta(2k)}{ k\cdot p^{2k-2}}
	+(-1)^m\frac{\zeta(2m)}{m\cdot p^{2m-2}}\eps_{p,m},
	\]
	with $0<\eps_{p,m}<\abs{b_{2m}}$, where $\zeta$ is the well-known Riemann zeta function.
\end{proposition}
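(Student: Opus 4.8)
The plan is to obtain the statement by direct substitution, since Proposition~\ref{pr95} has already expressed $\pi J_p$ entirely in terms of the series $C_p$, and Proposition~\ref{pr82} supplies a finite expansion of $C_p$ with an explicitly bounded remainder. Concretely, Proposition~\ref{pr95} reads
\[
\pi J_p=-p^2\ln p+(\ln(2\pi)-\gamma)p^2-p+2p^2C_p,
\]
so the entire task reduces to expanding the single term $2p^2C_p$.

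First I would recall from Proposition~\ref{pr82} that, for every $m\geq1$,
\[
C_p=-\sum_{k=1}^{m-1}\frac{b_{2k}\zeta(2k)}{2k\cdot p^{2k}}+(-1)^m\frac{\zeta(2m)}{2m\cdot p^{2m}}\eps_{p,m},\qquad 0<\eps_{p,m}<\abs{b_{2m}}.
\]
Multiplying this identity by $2p^2$ turns each factor $\frac{1}{2k\cdot p^{2k}}$ into $\frac{1}{k\cdot p^{2k-2}}$ and turns $\frac{1}{2m\cdot p^{2m}}$ into $\frac{1}{m\cdot p^{2m-2}}$, while the same quantity $\eps_{p,m}$ appears in the remainder with the identical bounds $0<\eps_{p,m}<\abs{b_{2m}}$. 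Inserting the resulting expression for $2p^2C_p$ into the displayed identity for $\pi J_p$ produces the claimed formula term for term.

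The restriction $p\geq2$ in the statement is inherited from Proposition~\ref{pr95}, and the constraint $m\geq1$ matches that of Proposition~\ref{pr82}; I would note only that the positive factor $2p^2$ is absorbed entirely into the explicit powers of $p$, so it in no way disturbs the sign or size of the remainder coefficient $\eps_{p,m}$. There is no essential obstacle in this argument: the proposition is a formal algebraic consequence of the two cited results, and the one manipulation---multiplication of a convergent, remainder-controlled expansion by $2p^2$---is routine.
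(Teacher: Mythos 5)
Your proposal is correct and is exactly the paper's own argument: the paper derives Proposition~\ref{pr96} by combining Proposition~\ref{pr95} with Proposition~\ref{pr82}, which amounts to the same substitution of the expansion of $C_p$ into $\pi J_p=-p^2\ln p+(\ln(2\pi)-\gamma)p^2-p+2p^2C_p$ and the same bookkeeping of the factor $2p^2$. The algebra checks out term for term, including the unchanged remainder bound $0<\eps_{p,m}<\abs{b_{2m}}$.
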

\qquad Using the values of the $\zeta(2k)$'s \cite[Formula 9.542]{grad}), and considering separately
the cases $m$ even and $m$ odd we obtain
the next corollary.

\begin{corollary}\label{cor97}
	For every positive integer $p$ and every nonnegative integer $n$, the sum of cotangents $J_p$ defined by \eqref{E:J} satisfies the following
	inequalities:
	\begin{align*}
		J_p&< \frac{1}{\pi}\left(-p^2\ln p+(\ln(2\pi)-\gamma)p^2 -p\right) 
		+2\pi\sum_{k=1}^{2n}(-1)^k\frac{b_{2k}^2}{ k\cdot(2k)!} \left(\frac{2\pi}{ p}\right)^{2k-2},\\
		\noalign{\text{and}}
		J_p&> \frac{1}{\pi}\left(-p^2\ln p+(\ln(2\pi)-\gamma)p^2 -p\right) 
		+2\pi\sum_{k=1}^{2n+1}(-1)^k\frac{b_{2k}^2}{ k\cdot(2k)!} \left(\frac{2\pi}{ p}\right)^{2k-2}.
	\end{align*}
\end{corollary}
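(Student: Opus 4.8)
The plan is to obtain the corollary directly from Proposition~\ref{pr96} by substituting the closed form of $\zeta(2k)$ and then reading off the sign of the truncation error, exactly parallel to the passage from Proposition~\ref{pr93} to Corollary~\ref{cor94}. First I would record the standard evaluation $\zeta(2k)=\frac{(2\pi)^{2k}(-1)^{k-1}b_{2k}}{2(2k)!}$, which one extracts from the relation $\eta(2k)=(1-2^{1-2k})\zeta(2k)$ together with the closed form of $\eta(2k)$ already quoted in the excerpt. Inserting this into the finite sum $-\sum_{k=1}^{m-1}\frac{b_{2k}\zeta(2k)}{k\cdot p^{2k-2}}$ of Proposition~\ref{pr96}, each summand becomes $\frac{2\pi^2(-1)^k b_{2k}^2}{k(2k)!}\bigl(\frac{2\pi}{p}\bigr)^{2k-2}$, after collecting the factor $(2\pi)^{2k}/p^{2k-2}=4\pi^2\bigl(\frac{2\pi}{p}\bigr)^{2k-2}$ and absorbing the sign $-(-1)^{k-1}=(-1)^k$. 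Dividing through by $\pi$ turns these into the terms $2\pi(-1)^k\frac{b_{2k}^2}{k(2k)!}\bigl(\frac{2\pi}{p}\bigr)^{2k-2}$ of the statement, so the polynomial prefactor $\frac1\pi\bigl(-p^2\ln p+(\ln(2\pi)-\gamma)p^2-p\bigr)$ and the entire finite sum match the corollary exactly.

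The direction of each inequality then comes solely from the error term $(-1)^m\frac{\zeta(2m)}{m\cdot p^{2m-2}}\eps_{p,m}$. Since $\zeta(2m)>0$ and, by Proposition~\ref{pr96}, $0<\eps_{p,m}<\abs{b_{2m}}$, this error is nonzero with the sign of $(-1)^m$. Choosing $m=2n+1$ makes the error strictly negative while the finite sum runs over $k=1,\dots,2n$, so $\pi J_p$ lies strictly below the corresponding partial sum; choosing $m=2n+2$ makes the error strictly positive while the sum runs over $k=1,\dots,2n+1$, so $\pi J_p$ lies strictly above. Dividing by $\pi>0$ preserves both strict inequalities and yields precisely the two displayed bounds, for every $n\ge 0$.

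Finally, although Proposition~\ref{pr96} is stated for $p\ge2$, its two ingredients---Proposition~\ref{pr95}, valid for every positive integer $p$, and Proposition~\ref{pr82}, valid for all positive integers $p,m$---both hold at $p=1$, so the identity and hence the sign argument carry over verbatim to $p=1$; this covers the full range ``every positive integer $p$'' claimed in the statement. I expect the only delicate point to be the sign-and-exponent bookkeeping in the $\zeta(2k)$ substitution (tracking $(-1)^{k-1}$ against the overall minus sign, and the split $(2\pi)^{2k}=4\pi^2(2\pi)^{2k-2}$); once that is checked, the even/odd case distinction and the strict positivity of $\eps_{p,m}$ deliver the result with no further analysis.
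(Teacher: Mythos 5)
Your proposal is correct and follows exactly the paper's (very brief) derivation: substitute $\zeta(2k)=\frac{(-1)^{k-1}(2\pi)^{2k}b_{2k}}{2\,(2k)!}$ into Proposition~\ref{pr96} and take $m=2n+1$ and $m=2n+2$ to read off the two inequalities from the sign $(-1)^m$ of the error term. Your extra remark on extending from $p\ge2$ to $p=1$ via Propositions~\ref{pr95} and \ref{pr82} is a legitimate patch for a detail the paper leaves implicit.
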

\bg
 As an example, for $n=0$ we obtain the following double inequality, which is valid for  $p\geq1$ :
\[
0 < \frac{1}{\pi}\left(-p^2\ln p+(\ln(2\pi)-\gamma)p^2 -p\right)-J_p<\frac{\pi}{36}
\]
\bg

\begin{remark} \label{rm98} Note that we have proved the following results. For a postive integer $p$:
	\begin{align*}
		\sum_{n=1}^\infty(-1)^{n-1}(H_{pn}-\ln(pn)-\gamma)&=\frac{\ln(\pi/2)-\gamma-\ln p}{2}+\frac{\ln 2}{2p}
		+\frac{\pi}{4p}\sum_{k=1}^{p-1}\csc\left(\frac{k \pi}{p}\right).\\
		\sum_{n=0}^\infty(-1)^{n}(H_{p(n+1)}-H_{pn})&=\frac{\ln 2}{p}
		+\frac{\pi}{2p}\sum_{k=1}^{p-1}\csc\left(\frac{k \pi}{p}\right).\\
		\sum_{n=1}^\infty\left(H_{pn}-\ln(pn)-\gamma-\frac{1}{2pn}\right)&=
		\frac{\ln p+\gamma-\ln(2\pi)}{2}+\frac{1}{2p}+\frac{\pi}{2p^2}\sum_{k=1}^{p-1}k\cot\left(\frac{k \pi}{p}\right).
	\end{align*}
	These results are to be compared with those in \cite{kou}, see also \cite{kou1}.
\end{remark}


\end{document}